\theoremstyle{plain}
\newtheorem{thm}{Theorem}[section]
\newtheorem{claim}[thm]{Claim}
\newtheorem{corollary}[thm]{Corollary}
\newtheorem{definition}[thm]{Definition}
\newtheorem{example}[thm]{Example}
\newtheorem{lemma}[thm]{Lemma}
\newtheorem{proposition}[thm]{Proposition}
\newtheorem{remark}[thm]{Remark}
\numberwithin{equation}{section}
\newcommand{\N}{\mathbb{N}}
\newcommand{\B}{\mathbb{B}}
\newcommand{\E}{\mathcal{E}}
\newcommand{\R}{\mathbb{R}}
\newcommand{\C}{\mathbb{C}}
\DeclareMathOperator{\co}{co\,\!}
\DeclareMathOperator{\lip}{Lip\,\!}
\begin{document}
\title[$C^{1,1}$ convex extensions of convex functions]{An Extension Theorem for convex functions of class $C^{1,1}$ on Hilbert spaces}
\author{Daniel Azagra}
\address{ICMAT (CSIC-UAM-UC3-UCM), Departamento de An{\'a}lisis Matem{\'a}tico,
Facultad Ciencias Matem{\'a}ticas, Universidad Complutense, 28040, Madrid, Spain }
\email{azagra@mat.ucm.es}

\author{Carlos Mudarra}
\address{ICMAT (CSIC-UAM-UC3-UCM), Calle Nicol\'as Cabrera 13-15.
28049 Madrid SPAIN}
\email{carlos.mudarra@icmat.es}

\date{February 17, 2016}

\keywords{convex function, $C^{1,1}$ function, Whitney extension theorem}

\thanks{D. Azagra was partially supported by Ministerio de Educaci\'on, Cultura y Deporte, Programa Estatal de Promoci\'on del Talento y su Empleabilidad en I+D+i, Subprograma Estatal de Movilidad. C. Mudarra was supported by Programa Internacional de Doctorado Fundaci\'on La Caixa--Severo Ochoa. Both authors partially supported by MTM2012-34341.}

\subjclass[2010]{54C20, 52A41, 26B05, 53A99, 53C45, 52A20, 58C25, 35J96}

\begin{abstract}
Let $\mathbb{H}$ be a Hilbert space, $E \subset \mathbb{H}$ be an arbitrary subset and $f: E \to \R, \: G: E \to \mathbb{H}$ be two functions. We give a necessary and sufficient condition on the pair $(f,G)$ for the existence of a \textit{convex} function $F\in C^{1,1}(\mathbb{H})$ such that $F=f$ and $\nabla F =G$ on $E$. We also show that, if this condition is met, $F$ can be taken so that $\lip(\nabla F) = \lip(G)$. We give a geometrical application of this result, concerning interpolation of sets by boundaries of $C^{1,1}$ convex bodies in $\mathbb{H}$. Finally, we give a counterexample to a related question concerning smooth convex extensions of smooth convex functions with derivatives which are not uniformly continuous.
\end{abstract}

\maketitle

\section{Introduction and main results}

Throughout this paper $\mathbb{H}$ will be a real Hilbert space equipped with inner product $\langle \cdot \: , \cdot \rangle.$ The norm in $\mathbb{H}$ will be denoted by $\| \cdot \|.$ By a $1$-jet $(f,G)$ on a subset $E \subset \mathbb{H}$ we understand a pair of functions $f:E \to \R,\: G:E \to \mathbb{H}.$ 
Given a $1$-jet $(f,G)$ defined on $E \subset \mathbb{H},$ Le Gruyer proved in \cite{LeGruyer} that a necessary and sufficient condition on the jet $(f,G)$ for having a $C^{1,1}$ extension $F$ to the whole space $ \mathbb{H}$ is that
$$
\Gamma(f,G,E):=\sup_{x,y\in E} \left( \sqrt{A_{x,y}^2+B_{x,y}^2} + |A_{x,y}| \right) < \infty,
$$
where
$$
A_{x,y} = \frac{2(f(x)-f(y))+\langle G(x)+G(y),y-x \rangle}{\| x-y\|^2} \quad \text{and}
$$
$$
 B_{x,y} = \frac{\|G(x)-G(y)\|}{\|x-y\|} \quad \text{for all} \quad x,y\in E, x\neq y.
 $$
This condition is equivalent to
$$
2\sup_{y\in\mathbb{H}}\sup_{a\neq b\in E}\frac{f(a)-f(b)+\langle G(a), y-a\rangle - \langle G(b), y-b\rangle}{\|a-y\|^2+\|b-y\|^2} <\infty,
$$
and in particular Le Gruyer's Theorem provides a generalization (to the setting of Hilbert spaces) of Glaeser's version of the Whitney Extension Theorem \cite{Whitney, Glaeser} for $C^{1,1}$ functions. See also \cite{Wells} for another generalization of the $C^{1,1}$ version of the Whitney Extension Theorem for functions defined on subsets of the Hilbert space, and for a proof that the Whitney Extension Theorem fails for $C^3$ functions on Hilbert spaces. We also refer to \cite{JSSG} for a version of the Whitney Extension Theorem for $C^1$ functions on some Banach spaces (including Hilbert spaces).

Le Gruyer also shows in  \cite{LeGruyer} that the extension $F$ satisfies 
$$
\lip(\nabla F)=\Gamma(F, \nabla F,\mathbb{H}) = \Gamma(f,G,E).
$$  

Our purpose in this paper is to solve an analogous problem for convex functions. In a recent paper \cite{AM}, by introducing a new condition $(CW^{1,1})$, see Definition \ref{definitioncw11} below, we gave a satisfactory solution to this problem for convex functions of the class $C^{1,1}(\R^n)$ (in fact, for all the classes $C^{1,\omega}(\R^n)$, where $\omega$ is a modulus of continuity), with a good control of the Lipschitz constant of the gradient of the extension in terms of that of $G,$ namely $\lip(\nabla F) \leq c(n) \lip(G),$ where $c(n)$ only depends on $n$ (but tends to infinity with $n$); see also \cite{AMSmooth} for information about related problems of higher order. In this paper we generalize and improve this result for $C^{1,1}$ convex functions defined on an arbitrary Hilbert space, showing in particular that those constants $c(n)$ can all be taken equal to $1$. Nevertheless, it must be observed that whereas the proofs in \cite{AM} (and of course the proof of the $C^{1,1}$ version of the Whitney Extension Theorem too) are constructive, the proofs of Le Gruyer's Theorem in \cite{LeGruyer} and of the main result in the present paper (which is strongly inspired by that of Le Gruyer's) are not, as they both rely on an application of Zorn's lemma.

\begin{definition}\label{definitioncw11}
We will say that a pair of functions $f:E \to \R,\: G:E \to \mathbb{H}$ defined on a subset $E\subset \mathbb{H},$ satisfies condition $(CW^{1,1})$ on $E$ provided that there exists a constant $M>0$ with
$$
f(x)-f(y)-\langle G(y), x-y \rangle \geq \frac{1}{2M}\|G(x)-G(y)\|^2 
\eqno(CW^{1,1})
$$
for all $x,y\in E.$
\end{definition}

\begin{remark}\label{remarkcw11implieslipschitz}
If $(f,G)$ satisfies $(CW^{1,1})$ on $E,$ then 
$$
f(x) \geq f(y)+ \langle G(y), x-y \rangle \quad \text{for all} \quad x,y\in E
$$
and
$$
\sup_{x\neq y,\: x,y\in E} \left\lbrace \frac{|f(x)-f(y)-\langle G(y), x-y \rangle|}{\|x-y\|^2}, \frac{\|G(x)-G(y)\|}{\|x-y\|} \right\rbrace \leq M.
$$
In particular $G$ is $M$-Lipschitz on $E.$
\end{remark}
\begin{proof}
The first inequality is obvious. For the second one, given $x,y\in E,$ the condition $(CW^{1,1})$ gives us the inequalities:
$$
f(x)-f(y)-\langle G(y), x-y \rangle \geq \frac{1}{2M}\|G(x)-G(y)\|^2 \quad \text{and}
$$
$$
f(y)-f(x)-\langle G(x), y-x \rangle \geq \frac{1}{2M}\|G(y)-G(x)\|^2,
$$
the sum of which yields
$$
 \langle G(x)-G(y), x-y \rangle \geq \frac{1}{M} \|G(x)-G(y)\|^2,
 $$
which in turn implies $\|G(x)-G(y)\| \leq M\|x-y\|.$ 
On the other hand, using again $(CW^{1,1})$ we obtain
$$
0 \leq f(x)-f(y)-\langle G(y), x-y \rangle \leq \langle G(y)-G(x),y-x \rangle.
$$
The desired inequality follows by combining the last one with the fact that $G$ is $M$-Lipschitz on $E.$
\end{proof}

The main result of this paper is as follows. 
\begin{thm} \label{mainextensiontheorem}
Let $E$ be a subset of $ \mathbb{H}$ and $f:E \to \R,\: G : E \to \mathbb{H}$ be two functions. Then there exists a convex function $F$ of class $C^{1,1}(\mathbb{H})$ such that $F=f$ and $\nabla F = G$ on $E$ if and only if $(f,G)$ satisfies condition $(CW^{1,1})$ on $E.$ Moreover, if $M>0$ is as in Definition \ref{definitioncw11}, then $F$ can be taken such that $(F,\nabla F)$ also satisfies $(CW^{1,1})$ on $\mathbb{H}$ with the same constant $M.$
\end{thm}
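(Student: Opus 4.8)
The plan is to follow the structure of Le Gruyer's proof. The necessity is easy: if $F \in C^{1,1}(\mathbb{H})$ is convex with $\lip(\nabla F) \le M$, then for any $x, y$, writing $\varphi(t) = F(y + t(x-y))$ and using the standard estimate for $C^{1,1}$ functions together with convexity of $F$ (so that $F(x) \ge F(y) + \langle \nabla F(y), x-y\rangle$), one derives the inequality $F(x) - F(y) - \langle \nabla F(y), x-y \rangle \ge \frac{1}{2M}\|\nabla F(x) - \nabla F(y)\|^2$; this is a well-known "co-coercivity" type bound (it follows, e.g., by applying the $C^{1,1}$ estimate to the convex function $z \mapsto F(z) - \langle \nabla F(x), z\rangle$, which attains its minimum at $x$).

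For the sufficiency, the first step is to define, for a jet $(f, G)$ satisfying $(CW^{1,1})$ with constant $M$ on a set $E$, the natural "minimal convex extension" candidate $m(x) = \sup_{y \in E}\big( f(y) + \langle G(y), x - y\rangle + \frac{1}{2M}\|G(y) - \text{(something)}\|^2 \big)$ — more precisely one works with a formula expressing the largest convex function below whose gradient-like behaviour is consistent with the jet, in the spirit of $g(x) = \sup_y \big(f(y) + \langle G(y), x-y\rangle\big)$ but corrected to respect the $C^{1,1}$ constraint. The second step, and the heart of the argument, is to show that the family of $1$-jets on larger sets $E' \supset E$ extending $(f,G)$ and still satisfying $(CW^{1,1})$ with the same constant $M$ admits, via Zorn's lemma, a maximal element; one must verify that an arbitrary union of a chain of such jets still satisfies $(CW^{1,1})$ with constant $M$ (immediate, since the condition is a pointwise condition on pairs), and then show the maximal jet must be defined on all of $\mathbb{H}$. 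The crucial point is a \emph{one-point extension lemma}: if $(f,G)$ satisfies $(CW^{1,1})$ with constant $M$ on $E$ and $x_0 \notin E$, then one can choose a value $f(x_0)$ and a vector $G(x_0)$ so that the extended jet still satisfies $(CW^{1,1})$ with the same $M$ — this is where the Hilbert space geometry enters, and it should reduce to solving a system of quadratic inequalities, finding a point in an intersection of balls or a related convex feasibility problem, using completeness of $\mathbb{H}$.

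The third step is to check that once $(f,G)$ satisfies $(CW^{1,1})$ on all of $\mathbb{H}$, the function $f$ itself is automatically convex and of class $C^{1,1}(\mathbb{H})$ with $\nabla f = G$ and $\lip(\nabla f) \le M$: convexity is exactly the first inequality of Remark \ref{remarkcw11implieslipschitz}, differentiability with $\nabla f = G$ follows from the two-sided squeeze $0 \le f(x) - f(y) - \langle G(y), x-y\rangle \le \langle G(x) - G(y), x - y\rangle \le M\|x-y\|^2$ in that remark, and Lipschitzness of $G$ is also given there. So the maximal jet \emph{is} the desired extension, with the same constant, proving the "moreover" clause for free.

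I expect the main obstacle to be the one-point extension lemma: one must simultaneously satisfy, for every $y \in E$, the inequalities $f(x_0) - f(y) - \langle G(y), x_0 - y\rangle \ge \frac{1}{2M}\|G(x_0) - G(y)\|^2$ and the symmetric ones with roles swapped, i.e. find $(t, v) = (f(x_0), G(x_0)) \in \R \times \mathbb{H}$ in an intersection of "downward paraboloid" regions. Showing this intersection is nonempty requires using $(CW^{1,1})$ for pairs in $E$ in an essential way (the Le Gruyer-type inequality among triples) plus a compactness/completeness argument to locate $v$; getting the constant to stay exactly $M$ (rather than a slightly worse constant) is the delicate part, and is presumably what forces the Zorn's lemma route rather than an explicit construction.
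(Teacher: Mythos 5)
Your skeleton coincides with the paper's: necessity via a co-coercivity argument (your reduction to the convex function $z\mapsto F(z)-\langle\nabla F(x),z\rangle$ minimized at $x$ works and is equivalent to the paper's one-dimensional argument), then Zorn's lemma over jets satisfying $(CW^{1,1})$ with the \emph{same} constant $M$, a one-point extension lemma as the crucial step, and finally the observation that a jet satisfying $(CW^{1,1})$ on all of $\mathbb{H}$ is automatically a convex $C^{1,1}$ function with gradient $G$ and $\lip(G)\le M$ (Remark \ref{remarkcw11implieslipschitz}), which gives the ``moreover'' clause. Those parts are fine; the intermediate ``minimal convex extension'' $m(x)$ you sketch is not needed and plays no role. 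The genuine gap is the one-point extension lemma itself, which you explicitly leave at the level of ``should reduce to \dots\ a convex feasibility problem'': that is where essentially all of the content of the paper lies, and nothing in your proposal shows that the feasibility problem is solvable with the constant $M$ unchanged.

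Concretely, what is missing is the following chain. First, for a fixed candidate $v=G(x_0)$ the two families of inequalities admit a common value $f(x_0)$ iff $s(x_0)\le I(x_0)$, and by the parallelogram law this is equivalent (Lemma \ref{lemmadefinitionballs}) to $v\in\B_{a,b}=B(Z_{a,b},r_{a,b})$ for all $a,b\in E$, with $Z_{a,b}=\tfrac12\bigl(G(a)+G(b)+M(x_0-b)\bigr)$ and $r_{a,b}^2=\alpha_{a,b}+\beta_{a,b}$, where $\alpha_{a,b}\ge0$ precisely because of $(CW^{1,1})$ on $E$. Second, weak compactness of closed balls reduces the problem to finitely many balls. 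Third—and this is the real crux—one must prove that these finitely many balls have a common point; this is not a soft completeness statement. The paper proves the key inequality $\Phi((a,b),(c,d))\ge-\langle\gamma_1(a),\gamma_2(d)\rangle-\langle\gamma_1(c),\gamma_2(b)\rangle$ (Lemma \ref{lemmacrucialinequality}), which is exactly where the hypothesis for the \emph{crossed} pairs $(a,d)$ and $(c,b)$ enters, and then runs a Kirszbraun-type variational argument (Lemma \ref{lemakirszbraun}, Federer 2.10.40, valid in $\mathbb{H}$ by weak compactness): taking $\lambda_0$ minimal so that the dilated balls $\B_{a,b}(\lambda)$ meet, the unique common point $Z_0$ lies in the convex hull of the active centers, $Z_0=\sum\xi_{a,b}Z_{a,b}$, and one computes that $\Delta:=(1-\lambda_0^2)\sum\xi_{a,b}\xi_{c,d}(r_{a,b}^2+r_{c,d}^2)$ equals $2\|Z_0\|^2+\sum\xi_{a,b}\xi_{c,d}\,\Phi((a,b),(c,d))\ge 2\bigl\|\tfrac{\Gamma_1-\Gamma_2}{2}\bigr\|^2\ge0$, whence $\lambda_0\le1$ (the degenerate case $r_{a,b}=0$ needs a separate short argument, Lemma \ref{lemmasingleton}). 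Without this step your argument does not establish existence of $(f(x_0),G(x_0))$ at all, let alone with the same constant $M$, which is also what Theorem \ref{lipschitzversionmainextensiontheorem} needs to be sharp.
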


Equivalently, bearing in mind Remark \ref{remarkcw11implieslipschitz}, Theorem \ref{mainextensiontheorem} can be reformulated in terms of the Lipschitz constant as follows.

\begin{thm}\label{lipschitzversionmainextensiontheorem}
Let $E$ be a subset of $ \mathbb{H},\: f:E \to \R$ be a function and $G : E \to \mathbb{H}$ a nonconstant Lipschitz mapping. A necessary and sufficient condition on the pair $(f,G)$ for the existence of a convex function $F$ of class $C^{1,1}(\mathbb{H})$ such that $F=f$ and $\nabla F = G$ on $E$ is that $(f,G)$ satisfies condition $(CW^{1,1})$ on $E$ with $M=\lip(G).$ In addition, if this condition is met, $F$ can be taken such that $\lip(\nabla F) =\lip (G).$ 
\end{thm}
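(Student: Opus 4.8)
The plan is to derive Theorem~\ref{lipschitzversionmainextensiontheorem} from Theorem~\ref{mainextensiontheorem}. By Remark~\ref{remarkcw11implieslipschitz}, requiring that $(f,G)$ satisfy $(CW^{1,1})$ on $E$ with $M=\lip(G)$ is the sharp instance of Definition~\ref{definitioncw11}; and if Theorem~\ref{mainextensiontheorem} is proved with the stated control of the constant, then with $M=\lip(G)$ it produces a convex $F\in C^{1,1}(\mathbb H)$ extending the jet with $(F,\nabla F)$ satisfying $(CW^{1,1})$ on $\mathbb H$ with the same $M$, whence $\lip(\nabla F)\le\lip(G)$ by Remark~\ref{remarkcw11implieslipschitz}, while $\lip(\nabla F)\ge\lip(G)$ because $\nabla F$ extends the nonconstant $G$; so $\lip(\nabla F)=\lip(G)$, and the necessity direction is the restriction to $E$ of the same statement on $\mathbb H$. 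Thus I will prove Theorem~\ref{mainextensiontheorem}, tracking the constant $M$ throughout. For necessity, if $F\in C^{1,1}(\mathbb H)$ is convex with $L=\lip(\nabla F)$, apply the standard $C^{1,1}$ estimate $\phi(z)\le\phi(y)+\langle\nabla\phi(y),z-y\rangle+\tfrac L2\|z-y\|^2$ to the convex function $\phi(z)=F(z)-\langle\nabla F(x),z\rangle$, which is minimised at $x$ since $\nabla\phi(x)=0$; minimising the right-hand side over $z$ gives $\phi(x)\le\phi(y)-\tfrac1{2L}\|\nabla\phi(y)\|^2$, i.e.\ $F(y)-F(x)-\langle\nabla F(x),y-x\rangle\ge\tfrac1{2L}\|\nabla F(y)-\nabla F(x)\|^2$, so $(F,\nabla F)$ satisfies $(CW^{1,1})$ on $\mathbb H$ with $M=L$, and restricting to $E$ settles this direction.

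For sufficiency I would follow Le Gruyer's Zorn-type scheme. Order by extension the family of all $1$-jets $(h,H)$ defined on sets $E'$ with $E\subseteq E'\subseteq\mathbb H$, extending $(f,G)$ and satisfying $(CW^{1,1})$ on $E'$ \emph{with the same constant $M$}; since $(CW^{1,1})$ is a condition on pairs of points, unions of chains remain in the family, so Zorn's lemma produces a maximal element $(\tilde f,\tilde G)$ on some $\tilde E$. If $\tilde E=\mathbb H$ we are done: running the computation of Remark~\ref{remarkcw11implieslipschitz} on all of $\mathbb H$ gives $0\le\tilde f(x)-\tilde f(y)-\langle\tilde G(y),x-y\rangle\le M\|x-y\|^2$ for all $x,y$ together with $M$-Lipschitz continuity of $\tilde G$; fixing $y$, the first estimate forces $\tilde f$ to be differentiable at $y$ with $\nabla\tilde f(y)=\tilde G(y)$, so $F:=\tilde f\in C^{1,1}(\mathbb H)$ with $\lip(\nabla F)\le M$, $\tilde f$ is convex because it lies above each of its tangent hyperplanes, and $(F,\nabla F)$ inherits $(CW^{1,1})$ with constant $M$. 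So everything reduces to showing that a maximal $\tilde E$ must be all of $\mathbb H$.

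The key step — and the one I expect to be the main obstacle — is the one-point extension lemma: every $(\tilde f,\tilde G)$ satisfying $(CW^{1,1})$ on $\tilde E$ with constant $M$ extends to $\tilde E\cup\{x_0\}$, for any $x_0\notin\tilde E$, with the same constant. Writing $a_y=\tilde f(y)$, $g_y=\tilde G(y)$, $d_y=x_0-y$ for $y\in\tilde E$, the two new instances of $(CW^{1,1})$ say exactly that the prospective values $\tilde f(x_0)=\lambda$, $\tilde G(x_0)=v$ must satisfy
$$a_y+\langle g_y,d_y\rangle+\tfrac1{2M}\|v-g_y\|^2\ \le\ \lambda\ \le\ a_y+\langle v,d_y\rangle-\tfrac1{2M}\|v-g_y\|^2 \quad\text{for all } y\in\tilde E .$$
These are convex constraints on $(\lambda,v)\in\R\times\mathbb H$, and summing the two bounds for a fixed $y$ forces $\|v-g_y\|\le M\|d_y\|$, so the solution set of the single-$y$ constraint is bounded, closed and convex, hence weakly compact in the Hilbert space $\R\times\mathbb H$; by the finite intersection property it therefore suffices to treat finite $\tilde E$, and then, projecting $v$ orthogonally onto the now finite-dimensional span of the vectors in play, a finite-dimensional problem. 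Such a $(\lambda,v)$ exists precisely when some $v$ makes the convex, coercive function
$$v\ \longmapsto\ \sup_{y,y'\in\tilde E}\Big(a_y-a_{y'}+\langle g_y,d_y\rangle-\langle v,d_{y'}\rangle+\tfrac1{2M}\|v-g_y\|^2+\tfrac1{2M}\|v-g_{y'}\|^2\Big)$$
nonpositive (then any $\lambda$ lying between the resulting bounds works), and I would prove that its infimum is $\le 0$: rewriting the two suprema over $\tilde E$ as suprema over probability measures on $\tilde E$ and invoking a Sion-type minimax reduces this to showing that for every pair $(\mu,\nu)$ of such measures the associated strongly convex function of $v$ has nonpositive minimum, which in turn should follow by integrating the hypothesis $(CW^{1,1})$ on $\tilde E$ — in the form $a_y-a_{y'}-\langle g_{y'},d_{y'}-d_y\rangle\ge\tfrac1{2M}\|g_y-g_{y'}\|^2$ — against $\mu\otimes\nu$ and completing squares. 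Carrying out this bookkeeping while keeping the constant equal to $M$, so that the gradient of the extension ends up with Lipschitz constant exactly $\lip(G)$, is the delicate point; it is the convexity-preserving, infinite-dimensional counterpart of Le Gruyer's one-point extension argument.
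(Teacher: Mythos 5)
Your proposal is correct in substance and follows the paper's global architecture---necessity from a co-coercivity estimate, sufficiency via Zorn's lemma through a one-point extension lemma, and reduction to finite $E$ by weak compactness---but it finishes the crucial finite case by a genuinely different mechanism. The paper converts the constraints on $G(x)$ into nonemptiness of $\bigcap_{(a,b)\in E^2}\B_{a,b}$ (Lemma \ref{lemmadefinitionballs}), handles the degenerate case $r_{a,b}=0$ separately (Lemma \ref{lemmasingleton}), and then uses Kirszbraun's intersection lemma from \cite{Federer} (Lemma \ref{lemakirszbraun}) to produce the critical dilation $\lambda_0$ and a point $Z_0\in\co\{Z_{a,b}\}$, proving $\lambda_0\le 1$ via $\Delta\ge 0$, which rests on Lemma \ref{lemmacrucialinequality}. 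You instead phrase feasibility as $\inf_v\sup_{y,y'}\Phi(y,y',v)\le 0$ and swap $\inf$ and $\sup$ over mixed strategies; this does work, but make two points explicit: since $\Phi(y,y',v)=u(y,v)-w(y',v)$, the payoff $\int\!\!\int\Phi\,d\mu\,d\nu=\int u\,d\mu-\int w\,d\nu$ is affine (not merely bilinear) in $(\mu,\nu)$, which is what makes Sion's theorem applicable on the compact product of simplices, and strong convexity gives an attained minimizer, turning $\inf\sup\le 0$ into an actual feasible $v$. Your per-measure inequality is indeed obtained by integrating $(CW^{1,1})$ against $\mu\otimes\nu$: after minimizing in $v$ it reduces to $4\langle P,Q\rangle\le\|P+Q\|^2$ with $P=\int G\,d\mu$ and $Q=\int\bigl(G+M(x-\cdot)\bigr)\,d\nu$ ($x$ the new point), i.e.\ to $\|P-Q\|^2\ge 0$, which is exactly the algebra of the paper's Claim \ref{claimdeltanonnegative} with $\Gamma_1,\Gamma_2$ in the roles of $P,Q$. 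So your route trades the Federer/Kirszbraun lemma and the $\lambda_0$-analysis for convex duality plus coercivity; the underlying inequality is the same. Two minor remarks: your necessity argument (equivalent in substance to Proposition \ref{necessity}, which derives the same bound by a one-dimensional restriction) gives $(CW^{1,1})$ with constant $\lip(\nabla F)$, which yields the constant $\lip(G)$ claimed in Theorem \ref{lipschitzversionmainextensiontheorem} only for extensions with $\lip(\nabla F)=\lip(G)$---the same reading the paper itself adopts when passing from Theorem \ref{mainextensiontheorem} to Theorem \ref{lipschitzversionmainextensiontheorem}; and your final step, recovering differentiability, convexity and the gradient's Lipschitz bound from $(CW^{1,1})$ on all of $\mathbb{H}$, is precisely the paper's use of Remark \ref{remarkcw11implieslipschitz}.
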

Obviously, there is no loss of generality in assuming that $G$ is not constant, as the problem is trivial otherwise (a $1$-jet $(f,G)$ on $E$ satisfying $f(x)-f(y) -\langle G(y), x-y\rangle\geq 0$ for $x,y\in E$ and such that $G$ constant extends to an affine function on $\mathbb{H}$).

As in \cite{AM}, we can use the above results to solve a geometrical problem concerning characterizations of subsets of a Hilbert space which can be interpolated by
boundaries of $C^{1,1}$ convex bodies (with prescribed unit outer normals). Namely, if
$C$ is a subset of a Hilbert space $\mathbb{H}$ and we are given a Lipschitz map $N:C\to \mathbb{H}$ such that $|N(y)|=1$ for every $y\in C$, it is natural to ask what conditions on $C$ and $N$ are necessary and sufficient for $C$ to be a subset of the boundary of a $C^{1,1}$ convex body $V$ such that $0\in\textrm{int}(V)$ and $N(y)$ is outwardly normal to $\partial V$ at $y$ for every $y\in C$. A suitable set of conditions is:
\begin{align*}
& (\mathcal{O}) & \langle N(y), y \rangle \geq \delta \textrm{ for all } y\in C; \\
& (\mathcal{K}\mathcal{W}^{1,1}) & \langle N(y), y-x\rangle \geq \delta |N(y)-N(x)|^2 \textrm{ for all } x, y\in C,
\end{align*}
for some $\delta>0$. The proof of \cite[Theorem 1.5]{AM} can easily be adapted to obtain the following.
\begin{corollary}\label{corollary for C11 convex bodies}
Let $C$ be a subset of a Hilbert space $\mathbb{H}$, and let $N:C\to\mathbb{H}$ be a Lipschitz mapping such that $|N(y)|=1$ for every $y\in C$.
Then the following statements are equivalent:
\begin{enumerate}
\item There exists a $C^{1,1}$ convex body $V$ with $0\in\textrm{int}(V)$ and such that $C\subseteq \partial V$ and $N(y)$ is outwardly normal to $\partial V$ at $y$ for every $y\in C$.
\item  $C$ and $N$ satisfy conditions $(\mathcal{O})$ and  $(\mathcal{K}\mathcal{W}^{1,1})$ for some $\delta>0$.
\end{enumerate}
\end{corollary}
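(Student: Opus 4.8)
The plan is to adapt the proof of \cite[Theorem 1.5]{AM} essentially line by line, using Theorem \ref{mainextensiontheorem} in place of the finite-dimensional extension theorem invoked there. Since the new theorem holds over an arbitrary Hilbert space, the adaptation is almost formal, and the only direction carrying real content is $(2)\Rightarrow(1)$.

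For $(2)\Rightarrow(1)$: assume $(\mathcal{O})$ and $(\mathcal{K}\mathcal{W}^{1,1})$ hold for some $\delta>0$. First note that $0\notin C$, for otherwise $(\mathcal{O})$ would give $0=\langle N(0),0\rangle\geq\delta>0$. Put $\sigma:=\inf_{y\in C}\langle N(y),y\rangle$, so $\sigma\geq\delta>0$ by $(\mathcal{O})$, and set $M:=\max\{1/\sigma,\,1/(2\delta)\}$. On $E':=C\cup\{0\}$ define a $1$-jet $(\tilde{f},\tilde{G})$ by $\tilde{f}\equiv0$, $\tilde{G}:=N$ on $C$, and $\tilde{f}(0):=-1/(2M)$, $\tilde{G}(0):=0$. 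The first task is to check that $(\tilde{f},\tilde{G})$ satisfies condition $(CW^{1,1})$ on $E'$ with the constant $M$: for $x,y\in C$ the required inequality is precisely $(\mathcal{K}\mathcal{W}^{1,1})$ together with $1/(2M)\leq\delta$; the inequality with $x\in C$, $y=0$ reduces to $1/(2M)\geq1/(2M)$; and the inequality with $x=0$, $y\in C$ reduces to $\langle N(y),y\rangle\geq1/M$, which holds since $M\geq1/\sigma$. Applying Theorem \ref{mainextensiontheorem} to $(\tilde{f},\tilde{G})$ then produces a convex $F\in C^{1,1}(\mathbb{H})$ with $F=0$ and $\nabla F=N$ on $C$, and with $F(0)=-1/(2M)<0$. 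Put $V:=\{x\in\mathbb{H}:F(x)\leq0\}$. The second task is to check that $V$ is the required convex body: $V$ is closed and convex; $0\in\textrm{int}(V)$ because $F(0)<0$ and $F$ is continuous; moreover $\nabla F$ cannot vanish on $\{F=0\}$, since $F(y)=0$ and $\nabla F(y)=0$ would make $y$ a global minimizer of the convex function $F$, contradicting $F(0)<0$. Hence $\{F=0\}=\partial V$, and by the implicit function theorem for $C^{1,1}$ mappings $\partial V$ is a $C^{1,1}$ hypersurface, with outer unit normal $\nabla F(z)/\|\nabla F(z)\|$ at $z\in\partial V$; for $z=y\in C$ this is $N(y)/\|N(y)\|=N(y)$.

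For $(1)\Rightarrow(2)$: let $V$, $N$ be as in (1) and choose $r>0$ with $\overline{B}(0,r)\subseteq V$. For $y\in C\subseteq\partial V$, applying the supporting hyperplane of $V$ at $y$ (with outer normal $N(y)$) to the point $rN(y)\in V$ gives $\langle N(y),rN(y)-y\rangle\leq0$, i.e.\ $\langle N(y),y\rangle\geq r$; this is $(\mathcal{O})$. Condition $(\mathcal{K}\mathcal{W}^{1,1})$ is the one that encodes the $C^{1,1}$-regularity of $\partial V$, and is obtained exactly as in the proof of \cite[Theorem 1.5]{AM}: in the $2$-plane through $y$ spanned by $N(y)$ and $y-x$, the body $V$ cuts out a planar convex region whose boundary curve has curvature controlled by the $C^{1,1}$-constant of $\partial V$; integrating the elementary relation between the height of this curve over the tangent line at $y$ (which equals $\langle N(y),y-x\rangle$) and the turning angle of its normal (which controls $\|N(y)-N(x)\|$) yields $\langle N(y),y-x\rangle\geq\delta\|N(y)-N(x)\|^2$ for a suitable $\delta>0$. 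When $V$ is bounded one may argue more quickly by working with the convex $C^{1,1}$ function $\tfrac12\mu_V^2-\tfrac12$, where $\mu_V$ is the Minkowski gauge of $V$: it vanishes on $\partial V$, its gradient there is $N(y)/\langle N(y),y\rangle$ with norm bounded above and below by $(\mathcal{O})$ and the boundedness of $V$, and the conclusion then follows from the inequality of Remark \ref{remarkcw11implieslipschitz} read in the reverse direction, which is valid for any convex function of class $C^{1,1}$.

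The equivalence is therefore a fairly direct consequence of Theorem \ref{mainextensiontheorem} and of standard facts about convex functions and bodies of class $C^{1,1}$. I expect the only genuinely delicate points to be, in $(2)\Rightarrow(1)$, the correct design of the auxiliary jet value at the origin — which forces the choice $M\geq1/\sigma$ and is exactly where condition $(\mathcal{O})$ enters — and the verification that $\nabla F$ does not vanish on $\{F=0\}$, so that $\{F\leq0\}$ really is a $C^{1,1}$ convex body with the origin in its interior; and, in $(1)\Rightarrow(2)$, the passage from the $C^{1,1}$-regularity of $\partial V$ to the quadratic estimate $(\mathcal{K}\mathcal{W}^{1,1})$, which is the one mildly technical step.
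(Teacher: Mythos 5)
Your direction $(2)\Rightarrow(1)$ is correct and is exactly the adaptation of \cite[Theorem 1.5]{AM} that the paper has in mind: the jet $\tilde f=0,\ \tilde G=N$ on $C$, $\tilde f(0)=-1/(2M)$, $\tilde G(0)=0$, the verification of $(CW^{1,1})$ on $C\cup\{0\}$, the appeal to Theorem \ref{mainextensiontheorem}, and the body $V=\{F\le 0\}$ (with $\nabla F\neq 0$ on $\{F=0\}$ because $F(0)<0$) all match the intended argument, and your checks of the three cases of $(CW^{1,1})$ are right.

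The gap is in $(1)\Rightarrow(2)$, in the passage to $(\mathcal{K}\mathcal{W}^{1,1})$. The planar-section sketch does not work as written: along the curve $\partial V\cap P$ the curvature equals the normal curvature divided by the cosine of the angle between the ambient normal and $P$ (Meusnier), so it is \emph{not} controlled by the $C^{1,1}$ constant of $\partial V$ except at $y$, where $N(y)\in P$; moreover the turning of the in-plane normal of the section only controls the projection of $N(x)$ onto $P$, so it does not dominate $\|N(y)-N(x)\|$, which is the quantity you must bound. Your rigorous alternative via $\tfrac12\mu_V^2-\tfrac12$ (which, incidentally, should invoke Proposition \ref{necessity} rather than Remark \ref{remarkcw11implieslipschitz}, together with the elementary bound $\|aN(x)-bN(y)\|^2\ge ab\|N(x)-N(y)\|^2$ for $a,b>0$) is fine, but only covers bounded $V$, whereas the corollary allows unbounded $C$ and hence unbounded $V$. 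What the hypothesis ``$V$ is a $C^{1,1}$ convex body'' must be read as supplying is a \emph{global} Lipschitz constant $L$ for the outer unit normal $n_V$ on all of $\partial V$ (with merely local $C^{1,1}$ regularity the implication $(1)\Rightarrow(2)$ fails in general for unbounded bodies, even when $N=n_V|_C$ is Lipschitz), and once $L$ is available there is a clean derivation: the $L$-Lipschitz normal yields the interior ball $B\left(y-\tfrac1L n_V(y),\tfrac1L\right)\subseteq V$ at every $y\in\partial V$ (rolling-ball property, valid in Hilbert space), and evaluating the supporting inequality $\langle n_V(x),z-x\rangle\le 0$, $z\in V$, at the point $z=y-\tfrac1L n_V(y)+\tfrac1L n_V(x)$ of this ball gives $\langle n_V(x),x-y\rangle\ge\tfrac{1}{2L}\|n_V(x)-n_V(y)\|^2$, which after exchanging $x$ and $y$ is $(\mathcal{K}\mathcal{W}^{1,1})$ with $\delta=1/(2L)$; your derivation of $(\mathcal{O})$ from $B(0,r)\subseteq V$ is correct as it stands.
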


It is natural to look for analogues of Theorems \ref{mainextensiontheorem} and \ref{lipschitzversionmainextensiontheorem} for $1$-jets $(f,G)$ on a closed subset $C$ of a Hilbert space $\mathbb{H}$ with $G$ not necessarily Lipschitz. If $G$ is uniformly continuous, it seems plausible that the condition $(CW^{1, \omega})$ found in \cite{AM} may be necessary and sufficient for $(f,G)$ to have a $C^{1, \omega}$ extension to $\mathbb{H}$. However, the proofs in the present paper cannot be adapted to that purpose. On the other hand, for the method of proof of \cite{AM} to work in an infinite-dimensional setting, we would need to have, among other things, a $C^{1, \omega}$ version of Whitney's extension theorem valid for infinite-dimensional Hilbert spaces, and to the best of our knowledge no one has established such a result (with the exception of Wells and Le Gruyer \cite{Wells, LeGruyer} in the particular case that $\omega(t)=t$). What we do know is that the conditions $(C)$, $(CW^1)$ and $(W^1)$ of \cite[Theorem 1.7]{AM} are not sufficient in the infinite-dimensional setting because, as we will show in Example \ref{counterexample in Hilbert} below, there exist bounded, smooth convex functions defined on an open neighborhood of a closed ball in $\mathbb{H}$ which have no continuous convex extensions to all of $\mathbb{H}$.

\section{Proof of Theorem \ref{mainextensiontheorem}}

\subsection{Necessity}
The necessity of condition $(CW^{1, 1})$ in Theorem \ref{mainextensiontheorem} follows from the following Proposition.
\begin{proposition}\label{necessity}
Let $f\in C^{1, 1}(\mathbb{H})$ be convex, and assume that $f$ is not affine. Then
$$
f(x)-f(y)-\langle \nabla f(y), x-y\rangle \geq \frac{1}{2M} \|\nabla f(x)-\nabla f(y)\|^2
$$
for all $x, y\in \mathbb{H}$, where 
$$
M=\sup_{x, y\in \mathbb{H}, \, x\neq y}\frac{\|\nabla f(x)-\nabla f(y)\|}{\|x-y\|}.
$$
\end{proposition}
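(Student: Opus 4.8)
The plan is to exploit the classical co-coercivity of the gradient of a convex $C^{1,1}$ function, whose proof combines the first-order characterization of convexity with the descent lemma. First I would record two preliminary observations: $M<\infty$ because $f\in C^{1,1}(\mathbb{H})$, so that $\nabla f$ is globally Lipschitz, and $M>0$ because $f$ is assumed not to be affine (if $\nabla f$ were constant, $f$ would be affine). This last point is precisely what will make it legitimate to divide by $M$ later on.

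Next, I fix $x,y\in\mathbb{H}$ and introduce the auxiliary function $\varphi(z)=f(z)-\langle\nabla f(y),z\rangle$. Since $f$ is convex and we have subtracted an affine functional, $\varphi$ is convex and of class $C^{1,1}(\mathbb{H})$, with $\nabla\varphi(z)=\nabla f(z)-\nabla f(y)$; in particular $\lip(\nabla\varphi)\leq M$ and $\nabla\varphi(y)=0$. By the first-order characterization of convexity, $\varphi(z)\geq\varphi(y)+\langle\nabla\varphi(y),z-y\rangle=\varphi(y)$ for every $z$, so $y$ is a global minimizer of $\varphi$.

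I then invoke the descent lemma: for any $C^{1,1}$ function $g$ on $\mathbb{H}$ with $\lip(\nabla g)\leq M$ and any $a,b\in\mathbb{H}$, writing $g(b)-g(a)-\langle\nabla g(a),b-a\rangle=\int_0^1\langle\nabla g(a+t(b-a))-\nabla g(a),b-a\rangle\,dt$ and applying Cauchy--Schwarz together with the Lipschitz bound gives $g(b)\leq g(a)+\langle\nabla g(a),b-a\rangle+\tfrac{M}{2}\|b-a\|^2$. Applying this to $g=\varphi$ with $a=x$ and $b=x-\tfrac1M\nabla\varphi(x)$, and using $\varphi(y)\leq\varphi(b)$, the inner-product term contributes $-\tfrac1M\|\nabla\varphi(x)\|^2$ and the quadratic term contributes $\tfrac1{2M}\|\nabla\varphi(x)\|^2$, so that
$$
\varphi(y)\ \leq\ \varphi(x)-\frac{1}{2M}\|\nabla\varphi(x)\|^2\ =\ \varphi(x)-\frac{1}{2M}\|\nabla f(x)-\nabla f(y)\|^2 .
$$
Substituting $\varphi(y)=f(y)-\langle\nabla f(y),y\rangle$ and $\varphi(x)=f(x)-\langle\nabla f(y),x\rangle$ and rearranging yields exactly $f(x)-f(y)-\langle\nabla f(y),x-y\rangle\geq\tfrac1{2M}\|\nabla f(x)-\nabla f(y)\|^2$.

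There is no genuine obstacle in this argument; the only point that requires care is the role of the hypothesis that $f$ is not affine, which is what guarantees $M>0$ and hence makes the choice of step size $1/M$ in the optimization step meaningful. The descent lemma is routine in the Hilbert-space setting because $t\mapsto f(a+t(b-a))$ is $C^{1,1}$ on $[0,1]$, so the fundamental theorem of calculus applies along line segments.
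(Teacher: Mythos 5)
Your proof is correct and is essentially the argument of the paper: both subtract the tangent plane at $y$ (your $\varphi$ is the paper's Case 2 reduction), both use the quadratic upper bound coming from the $M$-Lipschitz gradient evaluated at the gradient-descent point $x-\tfrac1M\nabla\varphi(x)$ (the paper's $x+av$ with $M=1$), and both conclude by comparing with the convexity lower bound at $y$. The only differences are presentational: you argue directly via the descent lemma on $\mathbb{H}$ rather than by contradiction along the one-dimensional restriction, and you keep $M$ throughout instead of normalizing $M=1$.
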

\noindent On the other hand, if $f$ is affine, it is obvious that $(f, \nabla f)$ satisfies $(CW^{1,1})$ on every $E\subset \mathbb{H}$, for every $M>0$. 
\begin{proof}
Suppose that there exist different points $x, y\in \mathbb{H}$ such that
$$
f(x)-f(y)-\langle \nabla f(y), x-y\rangle < \frac{1}{2M} \|\nabla f(x)-\nabla f(y)\|^2,
$$
and we will get a contradiction.

\noindent {\bf Case 1.} Assume further that $M=1$, $f(y)=0$, and $\nabla f(y)=0$.
By convexity this implies $f(x)\geq 0$.
Then we have 
$$
0\leq f(x)<\frac{1}{2}\|\nabla f(x)\|^2.
$$
Call $a=\|\nabla f(x)\|>0$, $b=f(x)$, set 
$$
v=-\frac{1}{\|\nabla f(x)\|}\nabla f(x),
$$
and define 
$$
\varphi(t)=f(x+tv)
$$
for every $t\in\R$. We have $\varphi(0)=b$, $\varphi'(0)=-a$, and  $\varphi'$ is $1$-Lipschitz. This implies that 
$$
|\varphi(t)-b+at|\leq \frac{t^2}{2}
$$
for every $t\in\R^{+}$, hence also that 
$$
\varphi(t)\leq -at+b+\frac{t^2}{2} \textrm{ for all } t\in \R^{+},
$$
By assumption we have $
b<\frac{1}{2} a^2,
$
and therefore
$$
f\left( x+a v\right)=\varphi\left( a\right)\leq -a^2+b+\frac{a^2}{2}<0,
$$
which is in contradiction with the assumptions that $f$ is convex, $f(y)=0$, and $\nabla f(y)=0$. This shows that
$$
f(x)\geq \frac{1}{2}\|\nabla f(x)\|^2.
$$

\noindent {\bf Case 2.} Assume only that $M=1$. Define
$$
g(z)=f(z)-f(y)-\langle \nabla f(y), z-y\rangle
$$
for every $z\in \mathbb{H}$. Then $g(y)=0$ and $\nabla g(y)=0$. By Case 1, we get
$$
g(x)\geq \frac{1}{2}\|\nabla g(x)\|^2,
$$
and since $\nabla g(x)=\nabla f(x)-\nabla f(y)$ the Proposition is thus proved in the case when $M=1$.

\noindent {\bf Case 3.} In the general case, we may assume $M>0$ (the result is trivial for $M=0$). Consider $\psi=\frac{1}{M}f$, which satisfies the assumption of Case 2. Therefore
$$
\psi(x)-\psi(y)-\langle \nabla \psi(y), x-y\rangle \geq \frac{1}{2} \|\nabla \psi(x)-\nabla \psi(y)\|^2,
$$
which is equivalent to the desired inequality.
\end{proof}
\subsection{Sufficiency}
Now we assume that $(f,G)$ satisfies condition $(CW^{1,1})$ on the set $E\subset \mathbb{H}$ with constant $M>0.$ If we prove that for any $x\in \mathbb{H} \setminus E$ there exist $z_x\in \R$ and $Z_x\in \mathbb{H}$ such that the pair $(\widetilde{f}, \widetilde{G}),$ defined by $\widetilde{f}=f,\: \widetilde{G}=G$ on $E$ and $\widetilde{f}(x)=z_x,\: \widetilde{G}(x)=Z_x,$ satisfies $(CW^{1,1})$ on $E \cup \{ x \}$ with the same constant $M,$ then 
Zorn's Lemma will imply the existence of a pair $(F, \tilde{\nabla} F)$ satisfying $(CW^{1,1})$ on $\mathbb{H}$ with constant $M.$ Hence by Remark \ref{remarkcw11implieslipschitz}, $F$ will be a convex function of class $C^{1,1}(\mathbb{H})$ such that $F=f,\: \nabla F = G$ on $E$ and $\lip(\nabla F) \leq M$, and this simultaneously will complete the proofs of Theorems \ref{mainextensiontheorem} and \ref{lipschitzversionmainextensiontheorem}.

To sum up, our only assumption is
\begin{equation}\label{hypothesisZornslemma}
f(b)-f(a)-\langle G(a), b-a \rangle \geq \frac{1}{2M}\|G(a)-G(b)\|^2 \quad \text{for all} \quad a,b\in E,
\end{equation}
and we have to show that for every $x\in \mathbb{H} \setminus E$ there exist $f(x) \in \R$ and $G(x) \in \mathbb{H}$ (denoted above by $z_x$ and $Z_x$ respectively) such that
\begin{align*}
&f(x)-f(a)-\langle G(a), x-a \rangle \geq \frac{1}{2M} \|G(x)-G(a)\|^2  \quad \text{and} \quad \\
& f(a)-f(x)-\langle G(x),a-x \rangle \geq \frac{1}{2M} \|G(x)-G(a)\|^2 \quad \text{for all} \quad a\in E.
\end{align*}
Note that these conditions are equivalent to 
\begin{align*}
& f(x) \geq f(a)+ \langle G(a), x-a \rangle + \frac{1}{2M} \|G(x)-G(a)\|^2 \,\,\, \textrm{ and }\\
& f(x) \leq f(b)- \langle G(x), b-x \rangle - \frac{1}{2M}  \| G(x)-G(b)\|^2 \quad \text{for all} \quad a,b\in E.
\end{align*}
If we prove the existence of a vector $G(x)\in \mathbb{H}$ such that
\begin{align*}
& s(x) :=\sup_{a\in E} \left( f(a)+ \langle G(a), x-a \rangle + \frac{1}{2M}  \|G(x)-G(a)\|^2 \right) \\
&\leq I(x):= \inf_{b\in E} \left( f(b)- \langle G(x), b-x \rangle - \frac{1}{2M}  \| G(x)-G(b)\|^2 \right),
\end{align*}
then it will be enough for us to take $f(x)$ as any number in the interval $[ s(x) , I(x) ].$ 

In what follows we will essentially keep Le Gruyer's notation because, 
although our numbers $\alpha_{a,b}$, $\beta_{a,b}$, $\Phi((a,b), (c,d))$, etc,  are different from Le Gruyer's, they will play a similar role in the proof.
Inspired by a strategy in Le Gruyer's proof of \cite[Theorem 2.6]{LeGruyer}, we will express the condition $s(x) \leq I(x)$ in the following way.

\begin{lemma}\label{lemmadefinitionballs}
The inequality $s(x) \leq I(x)$ is equivalent to
$$
\| G(x)- Z_{a,b}\|^2 \leq \alpha_{a,b}+ \beta_{a,b}, \quad \text{for all} \quad a,b\in E,
$$
where
\begin{align*}
& \alpha_{a,b} := M \Big ( f(b)-f(a)- \langle G(a), b-a \rangle \Big ) -\tfrac{1}{2}\|G(a)-G(b)\|^2,\\
& \beta_{a,b} := \Big \| \tfrac{1}{2} \Big ( G(b)-G(a)+M (x-b) \Big )\Big \|^2, \\
& Z_{a,b}:= \tfrac{1}{2} \Big ( G(a)+G(b)+ M (x-b) \Big ) .
\end{align*}
\end{lemma}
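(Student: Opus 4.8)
The plan is to unwind the definitions of $s(x)$ and $I(x)$, reduce the single inequality $s(x)\le I(x)$ to a family of inequalities indexed by pairs $(a,b)\in E\times E$, and then recognize each of these, after completing a square, as a bound on the squared distance from $G(x)$ to an explicit point. Since $s(x)$ is a supremum over $a\in E$ and $I(x)$ an infimum over $b\in E$, the first step is to note that $s(x)\le I(x)$ holds if and only if for every $a,b\in E$
$$
f(a)+\langle G(a),x-a\rangle+\tfrac{1}{2M}\|G(x)-G(a)\|^2 \le f(b)-\langle G(x),b-x\rangle-\tfrac{1}{2M}\|G(x)-G(b)\|^2.
$$
Multiplying by $2M$ and moving every $G(x)$-dependent term to the left, this is equivalent to
$$
\|G(x)-G(a)\|^2+\|G(x)-G(b)\|^2+2M\langle G(x),b-x\rangle \le 2M\bigl(f(b)-f(a)\bigr)-2M\langle G(a),x-a\rangle.
$$

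The next step is to complete the square in the variable $G(x)$ on the left-hand side. Expanding the two norms and the linear term, the left-hand side equals
$$
2\|G(x)\|^2-2\bigl\langle G(x),\,G(a)+G(b)+M(x-b)\bigr\rangle+\|G(a)\|^2+\|G(b)\|^2,
$$
and this is exactly $2\|G(x)-Z_{a,b}\|^2-2\|Z_{a,b}\|^2+\|G(a)\|^2+\|G(b)\|^2$ with $Z_{a,b}=\tfrac12\bigl(G(a)+G(b)+M(x-b)\bigr)$ as defined in the statement. Substituting and dividing by $2$, the condition $s(x)\le I(x)$ becomes: for all $a,b\in E$,
$$
\|G(x)-Z_{a,b}\|^2 \le M\bigl(f(b)-f(a)\bigr)-M\langle G(a),x-a\rangle-\tfrac12\|G(a)\|^2-\tfrac12\|G(b)\|^2+\|Z_{a,b}\|^2.
$$

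It then remains to verify the purely algebraic identity stating that the right-hand side above equals $\alpha_{a,b}+\beta_{a,b}$. Here I would split $-M\langle G(a),x-a\rangle=-M\langle G(a),x-b\rangle-M\langle G(a),b-a\rangle$, and use the polarization identity $\|p+q\|^2-\|p-q\|^2=4\langle p,q\rangle$ with $p=G(a)$ and $q=G(b)+M(x-b)$ to write $\|Z_{a,b}\|^2=\beta_{a,b}+\langle G(a),G(b)\rangle+M\langle G(a),x-b\rangle$. The two terms $\pm M\langle G(a),x-b\rangle$ cancel, and since $-\tfrac12\|G(a)\|^2-\tfrac12\|G(b)\|^2+\langle G(a),G(b)\rangle=-\tfrac12\|G(a)-G(b)\|^2$, what is left is precisely $M\bigl(f(b)-f(a)-\langle G(a),b-a\rangle\bigr)-\tfrac12\|G(a)-G(b)\|^2+\beta_{a,b}=\alpha_{a,b}+\beta_{a,b}$. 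This proves the equivalence.

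I do not expect a genuine obstacle: the definitions of $\alpha_{a,b}$, $\beta_{a,b}$ and $Z_{a,b}$ are tailored so that this computation closes. The only point requiring care is the bookkeeping of the mixed terms — those involving $\langle G(a),x-b\rangle$ and $\langle G(a),G(b)\rangle$ — once the square in $G(x)$ has been completed, together with the asymmetry between $a$ and $b$ encoded in $Z_{a,b}$ and $\beta_{a,b}$ (which traces back to the asymmetric roles of $a$ and $b$ in the definitions of $s(x)$ and $I(x)$).
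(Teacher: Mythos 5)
Your proposal is correct and follows essentially the same route as the paper: reduce $s(x)\le I(x)$ to the family of pairwise inequalities over $(a,b)\in E\times E$, complete the square in $G(x)$ to produce $\|G(x)-Z_{a,b}\|^2$, and verify algebraically that the remaining right-hand side is $\alpha_{a,b}+\beta_{a,b}$. The only difference is bookkeeping: the paper uses the parallelogram law and a two-step completion of the square, whereas you expand the norms directly and use polarization to identify $\|Z_{a,b}\|^2-\beta_{a,b}$, which is a harmless (arguably tidier) variant of the same computation.
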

\begin{proof}
 We have that $s(x) \leq I(x)$ if and only if, for all $a,b\in E$,
 $$
 f(a)+ \langle G(a), x-a \rangle + \frac{1}{2M}\|G(x)-G(a)\|^2 \leq f(b)- \langle G(x), b-x \rangle - \frac{1}{2M} \| G(x)-G(b)\|^2.
 $$
 Multiplying by $M$ we have that
 $$
 \frac{1}{2} \left( \|G(x)-G(a)\|^2 + \|G(x)-G(b)\|^2 \right) + M \langle G(x), b-x \rangle \leq M( f(b)-f(a) ) +M \langle G(a),a-x \rangle.
 $$
 Applying the Paralelogram Law to the left-side term we obtain
 \begin{align*}
 \frac{1}{4} & \left( \| 2 G(x)-G(a)-G(b)\|^2 + \|G(b)-G(a)\|^2 \right) + M \langle G(x), b-x \rangle \\
 & \qquad \leq M( f(b)-f(a) ) +M \langle G(a),a-x \rangle,
 \end{align*}
 or equivalently 
 \begin{align*}
 & \bigg \|  G(x)-\frac{G(a)+G(b)}{2} \bigg \|^2  + M \langle G(x), b-x \rangle \\
 &  \leq M( f(b)-f(a) ) +M \langle G(a),a-x \rangle - \frac{1}{4} \|G(b)-G(a)\|^2.
 \end{align*}
This can be written as
  \begin{align*}
  & \bigg \|  G(x)-\frac{G(a)+G(b)}{2} \bigg \|^2  -  2 \Big \langle G(x)-\frac{G(a)+G(b)}{2}, \frac{M}{2} (x-b) \Big \rangle + \frac{M^2}{4} \|x-b\|^2 \\
 &  \leq M( f(b)-f(a) ) +M \langle G(a),a-x \rangle - \frac{1}{4} \|G(b)-G(a)\|^2 \\
  & \qquad + 2 \Big \langle \frac{G(a)+G(b)}{2}, \frac{M}{2} (x-b) \Big \rangle + \frac{M^2}{4} \|x-b\|^2,
 \end{align*}
 which is equivalent to 
 \begin{align*}
  & \bigg \| \left( G(x)-\frac{G(a)+G(b)}{2} \right) - \frac{M}{2} (x-b) \bigg \|^2 \leq  \\
  & \leq M \Big ( f(b)-f(a)-\langle G(a), b-a \rangle \Big ) - \frac{1}{2}\|G(a)-G(b)\|^2+ M \langle G(a), b-a \rangle  \\
  &  + \Big \langle G(a)+G(b), \frac{M}{2} (x-b)  \Big \rangle + \frac{M^2}{4} \|x-b\|^2 + M \langle G(a), a-x \rangle + \frac{1}{4}\|G(a)-G(b)\|^2.
 \end{align*}
By the definition of $Z_{a,b}$ and $\alpha_{a,b}$ we obtain
 \begin{align*}
   & \| G(x)-Z_{a,b} \|^2 \leq \alpha_{a,b}+ M \langle G(a), b-a \rangle  + \Big \langle G(a)+G(b), \frac{M}{2} (x-b) \Big \rangle \\
  &  +  \frac{M^2}{4}\|x-b\|^2 + M \langle G(a), a-x \rangle + \frac{1}{4}\|G(a)-G(b)\|^2 \\ 
  & = \alpha_{a,b} +  \Big \langle G(b), \frac{M}{2} (x-b) \Big \rangle -  \Big \langle G(a), \frac{M}{2}(x-b) \Big \rangle + \frac{1}{4}\|G(a)-G(b)\|^2+ \frac{M^2}{4}\|x-b\|^2 \\
  & = \alpha_{a,b} + \frac{1}{4}\|G(a)-G(b)\|^2 + 2 \Big \langle \frac{G(b)-G(a)}{2}, \frac{M}{2} (x-b) \Big \rangle + \frac{M^2}{4}\|x-b\|^2 \\
  & = \alpha_{a,b} + \bigg \| \frac{1}{2}(G(b)-G(a))+ \frac{M}{2} (x-b) \bigg \|^2 = \alpha_{a,b}+ \beta_{a,b}.
 \end{align*}
\end{proof}

Note that, by condition \eqref{hypothesisZornslemma}, the number $\alpha_{a,b}$ of Lemma \ref{lemmadefinitionballs} is nonnegative. This allows us to introduce the radii $r_{a,b} := \sqrt{\alpha_{a,b}+ \beta_{a,b}}$ and the closed balls $\B_{a,b} := B\left( Z_{a,b}, r_{a,b} \right)$ centered at $Z_{a,b}$ and radius $r_{a,b},$ for every $(a,b) \in E^2.$ Hence Lemma \ref{lemmadefinitionballs} shows that our problem can be reduced to showing that
$$
\bigcap_{(a,b)\in E} \B_{a,b} \neq \emptyset,
$$
because in this case it would be enough to take $G(x)$ as any point in $\bigcap_{(a,b)\in E} \B_{a,b}.$ In fact, thanks to the weak compactness of the closed balls in $\mathbb{H},$ this is equivalent to prove
$$
\bigcap_{(a,b)\in F} \B_{a,b} \neq \emptyset \quad \text{for every finite subset} \quad F \subset E.
$$
 Thus, from now on we may and do assume that $E$ is finite. We now introduce some new notations:
\begin{align*}
& \Phi( (a,b), (c,d) ) := r_{a,b}^2+r_{c,d}^2-\|Z_{a,b}\|^2-\|Z_{c,d}\|^2 \quad \text{for all} \quad (a,b), (c,d) \in E^2,\\
& \gamma_1(a) :=G(a) , \quad \gamma_2(a) :=G(a)+ M (x-a)   \quad \text{for all} \quad a \in E.
\end{align*}
As in Le Gruyer's proof of \cite[Theorem 2.6]{LeGruyer}, a crucial step consists in showing an inequality concerning $\Phi( (a,b), (c,d) )$ and the funcions $\gamma_1, \: \gamma_2.$
\begin{lemma} \label{lemmacrucialinequality}
For every $(a,b),(c,d) \in E^2$ we have
$$
\Phi( (a,b), (c,d) ) \geq -  \langle  \gamma_1(a), \gamma_2(d) \rangle-\langle \gamma_1(c), \gamma_2(b) \rangle .
$$
\end{lemma}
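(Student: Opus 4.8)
The plan is to reduce the statement, by purely algebraic manipulations, to the trivial inequality $\alpha_{a,d}+\alpha_{c,b}\ge 0$. First I would observe that, by the definitions of $\gamma_1$ and $\gamma_2$, one has $Z_{a,b}=\tfrac12\bigl(\gamma_1(a)+\gamma_2(b)\bigr)$ and $\beta_{a,b}=\tfrac14\|\gamma_2(b)-\gamma_1(a)\|^2$, so that the polarization identity $\|u-v\|^2-\|u+v\|^2=-4\langle u,v\rangle$ yields
$$
r_{a,b}^2-\|Z_{a,b}\|^2=\alpha_{a,b}+\tfrac14\|\gamma_2(b)-\gamma_1(a)\|^2-\tfrac14\|\gamma_1(a)+\gamma_2(b)\|^2=\alpha_{a,b}-\langle\gamma_1(a),\gamma_2(b)\rangle.
$$
Adding the analogous identity with $(c,d)$ in place of $(a,b)$ gives $\Phi((a,b),(c,d))=\alpha_{a,b}+\alpha_{c,d}-\langle\gamma_1(a),\gamma_2(b)\rangle-\langle\gamma_1(c),\gamma_2(d)\rangle$, so the claimed inequality becomes equivalent to
$$
\alpha_{a,b}+\alpha_{c,d}\ \ge\ \langle\gamma_1(a)-\gamma_1(c),\,\gamma_2(b)-\gamma_2(d)\rangle.
$$

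Next I would establish the algebraic identity
$$
\alpha_{a,b}+\alpha_{c,d}-\langle\gamma_1(a)-\gamma_1(c),\,\gamma_2(b)-\gamma_2(d)\rangle=\alpha_{a,d}+\alpha_{c,b}.
$$
This is a routine computation once one compares $\alpha_{a,b}+\alpha_{c,d}$ with $\alpha_{a,d}+\alpha_{c,b}$ directly from the definition of $\alpha$: all the values of $f$ cancel; the difference of the terms $M\langle G(\cdot),\cdot\rangle$ collapses to $M\langle G(a)-G(c),\,d-b\rangle$; and the difference of the terms $-\tfrac12\|\cdot\|^2$ collapses, after expanding the squared norms, to $\langle G(a)-G(c),\,G(b)-G(d)\rangle$. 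Since $\gamma_1(a)-\gamma_1(c)=G(a)-G(c)$ and $\gamma_2(b)-\gamma_2(d)=\bigl(G(b)-G(d)\bigr)+M(d-b)$, the sum of those two contributions is exactly $\langle\gamma_1(a)-\gamma_1(c),\,\gamma_2(b)-\gamma_2(d)\rangle$, which is the identity.

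Finally, by hypothesis \eqref{hypothesisZornslemma} together with the observation made right after Lemma \ref{lemmadefinitionballs}, the quantity $\alpha_{p,q}$ is nonnegative for every $(p,q)\in E^2$; applying this to the pairs $(a,d)$ and $(c,b)$ gives $\alpha_{a,d}+\alpha_{c,b}\ge 0$, which combined with the identity above proves the lemma. The only place that demands care is the bookkeeping in that identity --- namely noticing that passing from the pairs $\{(a,b),(c,d)\}$ to the pairs $\{(a,d),(c,b)\}$ is exactly what manufactures the inner product on the left-hand side while leaving behind a manifestly nonnegative remainder; beyond this I do not anticipate any genuine difficulty, the argument being the Hilbert-space analogue of the corresponding step in Le Gruyer's proof of \cite[Theorem 2.6]{LeGruyer}.
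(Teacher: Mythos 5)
Your proposal is correct and follows essentially the same route as the paper: polarization turns $r_{a,b}^2-\|Z_{a,b}\|^2$ into $\alpha_{a,b}-\langle\gamma_1(a),\gamma_2(b)\rangle$, the index swap $\{(a,b),(c,d)\}\to\{(a,d),(c,b)\}$ produces the identity $\alpha_{a,b}+\alpha_{c,d}-\langle\gamma_1(a)-\gamma_1(c),\gamma_2(b)-\gamma_2(d)\rangle=\alpha_{a,d}+\alpha_{c,b}$ (the paper's $\delta_1+\delta_2$ computation and Claim \ref{claimdeltas} in condensed form), and the nonnegativity of $\alpha_{a,d},\alpha_{c,b}$ coming from \eqref{hypothesisZornslemma} finishes the argument exactly as in the paper. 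The only difference is organizational: you fold the paper's two separate computations into one identity, which is a slightly tidier presentation of the same proof.
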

\begin{proof}
Using that
\begin{align*}
& \alpha_{a,b} = M \Big ( f(b)-f(a)- \langle G(a), b-a \rangle\Big ) -\tfrac{1}{2}\|G(a)-G(b)\|^2 \,\,\, \textrm{ and }\\
& \alpha_{c,d} = M \Big ( f(d)-f(c)- \langle G(c), d-c \rangle \Big ) -\tfrac{1}{2}\|G(c)-G(d)\|^2
\end{align*}
we obtain
\begin{align*}
& \alpha_{a,b}+ \alpha_{c,d} \\
& = M \Big ( f(d)-f(a)- \langle G(a), d-a \rangle \Big ) -\tfrac{1}{2}\|G(a)-G(d)\|^2 (=\alpha_{a,d}) \\
& \quad + M \Big ( f(b)-f(c)- \langle G(c), b-c \rangle \Big) -\tfrac{1}{2}\|G(c)-G(b)\|^2 (= \alpha_{c,b}) \\
& \quad + M \Big ( \langle G(a), d-b \rangle + \langle G(c), b-d \rangle \Big) (=:\delta_1) \\
& \quad + \frac{1}{2} \big( -\|G(a)-G(b)\|^2-\|G(c)-G(d)\|^2 + \|G(a)-G(d)\|^2+\|G(c)-G(b)\|^2 \big ) (=:\delta_2)\\
& = \alpha_{a,d}+\alpha_{c,b}+\delta_1+\delta_2.
\end{align*}
Of course, because $(a,d), (c,b)\in E^2,$ condition \eqref{hypothesisZornslemma} implies that $\alpha_{a,d},\alpha_{c,b} \geq 0.$ As for $\delta_1$, we have that 
$$
\delta_1= M \langle G(a)-G(c), d-b \rangle.
$$
Computing term by term in $\delta_2$ we obtain
\begin{align*}
 \delta_2 &= \frac{1}{2} \bigg ( -\|G(a)\|^2-\|G(b)\|^2+ 2 \langle G(a), G(b) \rangle -\|G(c)\|^2-\|G(d)\|^2+ 2 \langle G(c), G(d) \rangle  \\
& \qquad \quad +\|G(a)\|^2+\|G(d)\|^2- 2 \langle G(a), G(d) \rangle +\|G(c)\|^2+\|G(b)\|^2- 2 \langle G(c), G(b) \rangle \bigg )\\
& = \langle G(a), G(b) \rangle + \langle G(c), G(d) \rangle- \langle G(a), G(d) \rangle - \langle G(c), G(b) \rangle.
\end{align*} 
\begin{claim}\label{claimdeltas} We have that
$$
\delta_1+ \delta_2 = - \langle \gamma_1(a),\gamma_2(d) \rangle - \langle  \gamma_1(c), \gamma_2(b) \rangle  + \langle \gamma_1(a), \gamma_2(b)\rangle + \langle \gamma_1(c), \gamma_2(d)  \rangle.
$$
\end{claim}
Indeed, computing the term in the right side we obtain
\begin{align*}
&\quad - \Big \langle G(a),G(d) + M (x-d) \Big \rangle- \Big \langle  G(c),G(b) + M (x-b) \Big \rangle  \\
& \quad + \Big \langle G(a),G(b) + M (x-b) \Big \rangle + \Big \langle G(c),G(d) + M (x-d) \Big \rangle \\
& = -\langle G(a), G(d)- \langle G(b), G(c) \rangle + \langle G(a), G(b) \rangle + \langle G(c), G(d) \rangle \\
& \quad +M  \Big ( -\langle G(c), x-b \rangle - \langle G(a), x-d \rangle + \langle G(a), x-b \rangle + \langle G(c) , x-d \rangle \Big ) \\
& = \delta_2 + M \Big( \langle G(c), b-d \rangle + \langle G(a) , d-b \rangle \Big) = \delta_2+ \delta_1,
\end{align*}
and this proves our Claim.
\newline
On the other hand we note that
$$
\gamma_2(b)- \gamma_1(a) =  G(b)-G(a) + M (x-b),$$
and therefore
$$
\beta_{a,b}  = \Big \| \tfrac{1}{2} \Big ( G(b)-G(a)+M (x-b) \Big )\Big \|^2 = \Big \|\frac{\gamma_1(a)-\gamma_2(b)}{2} \Big \|^2.
$$
Similarly we have $\beta_{c,d} = \Big \| \frac{\gamma_1(c)-\gamma_2(d)}{2} \Big \|^2.$ We also see that 
\begin{equation}\label{sumgammasvab}
\gamma_1(a) +\gamma_2(b) = G(a)+G(b) + M (x-b) = 2 Z_{a,b}
\end{equation}

and $\gamma_1(c)+ \gamma_2(d)= 2 Z_{c,d}.$ These equations show that
\begin{align*}
& \beta_{a,b}+\beta_{c,d} = \Big \|\frac{\gamma_1(a)-\gamma_2(b)}{2} \Big \|^2 + \Big \|\frac{\gamma_1(c)-\gamma_2(d)}{2} \Big \|^2 \\
& \|Z_{a,b}\|^2+\|Z_{c,d}\|^2 = \Big \|\frac{\gamma_1(a)+\gamma_2(b)}{2} \Big \|^2 + \Big \|\frac{\gamma_1(c)+\gamma_2(d)}{2} \Big \|^2.
\end{align*}
By subtracting the second equation from the first one we obtain
$$
\beta_{a,b}+\beta_{c,d} -\|Z_{a,b}\|^2-\|Z_{c,d}\|^2 = - \langle \gamma_1(a), \gamma_2(b) \rangle - \langle \gamma_1(c), \gamma_2(d) \rangle .
$$
Finally, by using first Claim \ref{claimdeltas} and then the preceding equation we deduce
\begin{align*}
\Phi( (a,b), (c,d) )& = \alpha_{a,b}+\alpha_{c,d}+ \beta_{a,b}+\beta_{c,d} -\|Z_{a,b}\|^2-\|Z_{c,d}\|^2 \\
& = \alpha_{a,d}+\alpha_{c,b}+\delta_1+\delta_2+ \beta_{a,b}+\beta_{c,d} -\|Z_{a,b}\|^2-\|Z_{c,d}\|^2 \\
& \geq \delta_1+\delta_2+ \beta_{a,b}+\beta_{c,d} -\|Z_{a,b}\|^2-\|Z_{c,d}\|^2 \\
& = - \langle  \gamma_1(a), \gamma_2(d) \rangle-\langle \gamma_1(c), \gamma_2(b) \rangle .
\end{align*}
\end{proof}

In order to establish that $\bigcap_{(a,b) \in E^2} \B_{a,b} \neq \emptyset$ we first have to study the situation in which at least one of the balls of this family is a singleton. 

\begin{lemma}\label{lemmasingleton}
Suppose that there is $(a,b) \in E^2$ with $r_{a,b}=0$. Then 
$$
\bigcap_{(c,d) \in E^2} \B_{c,d} =\{ Z_{a,b}  \}
$$
and, in particular, the intersection is nonempty. 
\end{lemma}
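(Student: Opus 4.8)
The plan is to observe first that the inclusion $\bigcap_{(c,d)\in E^2}\B_{c,d}\subseteq\{Z_{a,b}\}$ is free of charge: since $r_{a,b}=0$ the ball $\B_{a,b}=B(Z_{a,b},r_{a,b})$ reduces to the single point $Z_{a,b}$, and the whole intersection is contained in this one factor. So the only real content is the reverse inclusion, i.e. that $Z_{a,b}$ belongs to \emph{every} ball $\B_{c,d}$, which is precisely the inequality $\|Z_{a,b}-Z_{c,d}\|^2\le r_{c,d}^2$ for each $(c,d)\in E^2$.

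Next I would extract the structural consequences of the hypothesis $r_{a,b}=0$. Since $r_{a,b}^2=\alpha_{a,b}+\beta_{a,b}$ with $\alpha_{a,b}\ge 0$ (by \eqref{hypothesisZornslemma}, as noted right after Lemma~\ref{lemmadefinitionballs}) and $\beta_{a,b}\ge 0$ trivially, the assumption forces $\alpha_{a,b}=0$ and $\beta_{a,b}=0$. From $\beta_{a,b}=\bigl\|\tfrac12(\gamma_2(b)-\gamma_1(a))\bigr\|^2=0$ we get $\gamma_1(a)=\gamma_2(b)$, and then by \eqref{sumgammasvab}, $2Z_{a,b}=\gamma_1(a)+\gamma_2(b)$, so in fact $Z_{a,b}=\gamma_1(a)=\gamma_2(b)$.

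The key step is then to apply Lemma~\ref{lemmacrucialinequality} to the pair $\bigl((a,b),(c,d)\bigr)$, which gives
$$\Phi\bigl((a,b),(c,d)\bigr)\ \ge\ -\langle\gamma_1(a),\gamma_2(d)\rangle-\langle\gamma_1(c),\gamma_2(b)\rangle.$$
On the left-hand side, since $r_{a,b}=0$ we have $\Phi\bigl((a,b),(c,d)\bigr)=r_{c,d}^2-\|Z_{a,b}\|^2-\|Z_{c,d}\|^2$. On the right-hand side, substituting $\gamma_1(a)=\gamma_2(b)=Z_{a,b}$ and using $\gamma_1(c)+\gamma_2(d)=2Z_{c,d}$ (the analogue of \eqref{sumgammasvab}) turns the bound into $-2\langle Z_{a,b},Z_{c,d}\rangle$. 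Rearranging the resulting inequality yields exactly $\|Z_{a,b}-Z_{c,d}\|^2\le r_{c,d}^2$, i.e. $Z_{a,b}\in\B_{c,d}$. As $(c,d)\in E^2$ was arbitrary, $Z_{a,b}\in\bigcap_{(c,d)\in E^2}\B_{c,d}$, and combining with the trivial inclusion from the first paragraph gives the claimed equality $\bigcap_{(c,d)\in E^2}\B_{c,d}=\{Z_{a,b}\}$, in particular non-emptiness.

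I do not anticipate a genuine obstacle here: the statement is a short bookkeeping consequence of Lemma~\ref{lemmacrucialinequality} once one notices that a zero-radius ball pins $Z_{a,b}$ down to $\gamma_1(a)=\gamma_2(b)$ and that the crucial inequality, evaluated at such a degenerate pair, collapses to the perfect square $\|Z_{a,b}-Z_{c,d}\|^2$. The only point requiring a little care is keeping the identifications $Z_{a,b}=\gamma_1(a)=\gamma_2(b)$ and the final sign/algebra in the rearrangement straight.
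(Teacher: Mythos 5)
Your proof is correct and follows the same route as the paper: use $\beta_{a,b}=0$ to get $Z_{a,b}=\gamma_1(a)=\gamma_2(b)$, then feed this into Lemma~\ref{lemmacrucialinequality} together with $\gamma_1(c)+\gamma_2(d)=2Z_{c,d}$ and rearrange $\Phi((a,b),(c,d))\ge -2\langle Z_{a,b},Z_{c,d}\rangle$ into $\|Z_{a,b}-Z_{c,d}\|^2\le r_{c,d}^2$. Your explicit remark that the inclusion $\bigcap_{(c,d)}\B_{c,d}\subseteq\{Z_{a,b}\}$ is automatic because $\B_{a,b}$ is a singleton is a small, welcome clarification that the paper leaves implicit.
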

\begin{proof}
The hyphotesis $r_{a,b}=0$ in particular implies that 
$$
0=\beta_{a,b} = \Big \| \tfrac{1}{2} \Big (G(b)-G(a)+M (x-b) \Big )\Big \|^2,
$$ and then we must have
$$
\gamma_1(a) =G(a) = G(b) + M (x-b) = \gamma_2(b).
$$
Because $2 Z_{a,b}=\gamma_1(a)+\gamma_2(b)$ (see equation \eqref{sumgammasvab}) we have that $Z_{a,b}= \gamma_1(a)$ and similarly $2 Z_{c,d}=\gamma_1(c)+\gamma_2(d).$ Combining this with the inequality of Lemma \ref{lemmacrucialinequality} we deduce, for all $(c,d) \in E^2,$
\begin{align*}
\Phi((a,b),(c,d)) & \geq  - \langle  \gamma_1(a), \gamma_2(d) \rangle-\langle \gamma_1(c), \gamma_2(b) \rangle  = - \langle \gamma_1(a) ,\gamma_1(c)+\gamma_2(d) \rangle \\
& = -2 \langle \gamma_1(a), Z_{c,d} \rangle = -2 \langle Z_{a,b}, Z_{c,d} \rangle.
\end{align*}
On the other hand, by definition of $\Phi$ we have
$$
\Phi((a,b),(c,d))= r_{c,d}^2 - \|Z_{a,b}\|^2-\|Z_{c,d}\|^2 = r_{c,d}^2 - \|Z_{a,b}-Z_{c,d}\|^2-2\langle Z_{a,b}, Z_{c,d} \rangle,
$$
and by plugging the last inequality in this expression we easily obtain
$$
\|Z_{a,b}-Z_{c,d}\|^2 \leq r_{c,d}^2 \quad \text{for all} \quad (c,d) \in E^2.
$$
\end{proof}

Since the preceding Lemma covers the case $r_{a,b}=0$ for some $(a,b)\in E^2,$ we may suppose from this moment on that $r_{a,b}>0$ for all $(a,b)\in E^2.$  Recall that we are also assuming that $E$ is finite. The following Lemma is essentially a restatement (for $P$ a finite set and replacing $\R^n$ with a Hilbert space) of \cite[2.10.40, p. 199]{Federer}, whose proof obviously extends for balls in Hilbert spaces if we bear in mind that they are compact in the weak topology.

\begin{lemma}[Kirszbraun]\label{lemakirszbraun} For every $\lambda \geq 0,$ we denote $\B_{a,b}(\lambda) = B(Z_{a,b}, \lambda r_{a,b}).$ Define $\lambda_0 \geq 0$ as
$$
\lambda_0:= \inf \bigg \lbrace \lambda \geq 0 \: : \: \bigcap_{(a,b)\in E^2} \B_{a,b}(\lambda) \neq \emptyset \bigg \rbrace.
$$
 Then $\bigcap_{(a,b)\in E^2} \B_{a,b}(\lambda_0) = \{ Z_0 \},$ where
$$
Z_0 \in \co \left\lbrace Z_{a,b} \: : \: (a,b) \in E^2 \quad \text{and} \quad \|Z_0-Z_{a,b}\| = \lambda_0 r_{a,b} \right\rbrace.
$$
\end{lemma}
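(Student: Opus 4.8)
The plan is to follow the classical proof of Kirszbraun's lemma, exactly as in \cite[2.10.40]{Federer}, the only modification being that norm-compactness of balls in $\R^n$ is replaced throughout by the weak compactness of closed balls in $\mathbb{H}$, which is available because $\mathbb{H}$ is reflexive. We use without further comment that $E$, hence $E^2$, is finite and that $r_{a,b}>0$ for all $(a,b)$, so that $r_{\max}:=\max_{(a,b)\in E^2}r_{a,b}\in(0,\infty)$. Write $C_\lambda:=\bigcap_{(a,b)\in E^2}\B_{a,b}(\lambda)$, so that $\lambda_0=\inf\{\lambda\ge 0:C_\lambda\neq\emptyset\}$ and $C_\lambda$ increases with $\lambda$.

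First I would check that $\lambda_0<\infty$ and that $C_{\lambda_0}\neq\emptyset$. Finiteness is immediate: for any fixed $p\in\mathbb{H}$, taking $\lambda\ge\max_{(a,b)\in E^2}\|p-Z_{a,b}\|/r_{a,b}$ puts $p$ in every $\B_{a,b}(\lambda)$. For attainment, note that $\B_{a,b}(\lambda_0)=\bigcap_{\lambda>\lambda_0}\B_{a,b}(\lambda)$ (here $r_{a,b}>0$ is used), hence $C_{\lambda_0}=\bigcap_{\lambda>\lambda_0}C_\lambda$; the sets $C_\lambda$ with $\lambda>\lambda_0$ are nonempty, convex, bounded and norm-closed, hence weakly compact, and they decrease as $\lambda\downarrow\lambda_0$, so their intersection is nonempty by the finite intersection property. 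Fix $Z_0\in C_{\lambda_0}$.

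Next I would establish $C_{\lambda_0}=\{Z_0\}$ together with the convex-hull property. If $\lambda_0=0$ then necessarily all the $Z_{a,b}$ coincide and both claims are trivial, so assume $\lambda_0>0$. For uniqueness, if $Z_1\in C_{\lambda_0}$ with $Z_1\neq Z_0$, the parallelogram law applied to the midpoint $m=\tfrac12(Z_0+Z_1)$ gives $\|m-Z_{a,b}\|^2\le\lambda_0^2r_{a,b}^2-\tfrac14\|Z_0-Z_1\|^2\le r_{a,b}^2\big(\lambda_0^2-\|Z_0-Z_1\|^2/(4r_{\max}^2)\big)$ for every $(a,b)\in E^2$, so $m\in C_\lambda$ for some $\lambda<\lambda_0$, contradicting minimality. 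For the last assertion, put $S:=\{(a,b)\in E^2:\|Z_0-Z_{a,b}\|=\lambda_0r_{a,b}\}$ and $K:=\co\{Z_{a,b}:(a,b)\in S\}$, a compact convex set being the convex hull of finitely many points. If $Z_0\notin K$, strictly separate $Z_0$ from $K$ to obtain $v\in\mathbb{H}$ and $\delta>0$ with $\langle v,Z_{a,b}-Z_0\rangle\ge\delta$ for all $(a,b)\in S$. Then for small $t>0$ the identity $\|Z_0+tv-Z_{a,b}\|^2=\|Z_0-Z_{a,b}\|^2-2t\langle v,Z_{a,b}-Z_0\rangle+t^2\|v\|^2$ shows $\|Z_0+tv-Z_{a,b}\|<\lambda_0r_{a,b}$ for $(a,b)\in S$, while the same strict inequality holds for $(a,b)\notin S$ by continuity once $t$ is small enough (here finiteness of $E^2$ is used again). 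Taking the maximum of $\|Z_0+tv-Z_{a,b}\|/r_{a,b}$ over $E^2$ produces a $\lambda<\lambda_0$ with $Z_0+tv\in C_\lambda$, again contradicting the definition of $\lambda_0$; hence $Z_0\in K$, as claimed.

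The only real subtlety is the attainment of the infimum in the first step, which is exactly where weak compactness of closed balls in $\mathbb{H}$ enters, as anticipated in the remark preceding the statement; everything else is the standard midpoint-convexity and separation argument, with the finiteness of $E$ supplying the uniform slack needed to contradict the minimality of $\lambda_0$.
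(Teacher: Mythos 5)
Your proof is correct and follows exactly the route the paper intends: the paper does not prove this lemma itself but cites \cite[2.10.40]{Federer} and remarks that the argument carries over once norm compactness of balls in $\R^n$ is replaced by weak compactness of closed balls in $\mathbb{H}$, which is precisely what you have written out (attainment of $\lambda_0$ via nested weakly compact sets, uniqueness via the parallelogram law, and the convex-hull property via separation plus a small perturbation, using the finiteness of $E^2$). The only micro-detail worth a word is that the active set $S$ must be nonempty for the separation step to make literal sense; that degenerate case is ruled out by the same perturbation argument with $t=0$, since otherwise $Z_0$ itself would lie in $C_\lambda$ for some $\lambda<\lambda_0$.
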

We will finish the proof of Theorem \ref{mainextensiontheorem} by establishing the following.

\begin{lemma} \label{lemmaconclusion}
With the notation of Lemma \ref{lemakirszbraun}, the number $\lambda_0$ satisfies $\lambda_0 \leq 1.$ In particular, the family of balls $\lbrace \B_{a,b} \: : \: (a,b) \in E^2 \rbrace$ has nonempty intersection.
\end{lemma}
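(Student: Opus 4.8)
The plan is to prove $\lambda_0\le 1$ by contradiction, feeding a Kirszbraun-type barycentre computation with the inequality of Lemma~\ref{lemmacrucialinequality}; recall that at this point we have already reduced to the case where $E$ is finite and $r_{a,b}>0$ for every $(a,b)\in E^2$. Assume $\lambda_0>1$. By Lemma~\ref{lemakirszbraun} we may write $Z_0=\sum_{(a,b)\in S}\mu_{a,b}Z_{a,b}$ as a convex combination over the active set $S=\{(a,b)\in E^2:\|Z_0-Z_{a,b}\|=\lambda_0 r_{a,b}\}$, with $\mu_{a,b}\ge 0$ and $\sum_{(a,b)\in S}\mu_{a,b}=1$. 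Taking the squared norm of the barycentre identity $\sum_{(a,b)\in S}\mu_{a,b}(Z_0-Z_{a,b})=0$ and polarising via $2\langle u,v\rangle=\|u\|^2+\|v\|^2-\|u-v\|^2$ gives
\[
0=\sum_{(a,b),(c,d)\in S}\mu_{a,b}\mu_{c,d}\bigl(\|Z_0-Z_{a,b}\|^2+\|Z_0-Z_{c,d}\|^2-\|Z_{a,b}-Z_{c,d}\|^2\bigr);
\]
since each pair in the sum is active (so $\|Z_0-Z_{a,b}\|^2=\lambda_0^2 r_{a,b}^2$) and $\sum\mu_{a,b}=1$, this reduces to
\[
0=2\lambda_0^2\sum_{(a,b)\in S}\mu_{a,b}r_{a,b}^2-\sum_{(a,b),(c,d)\in S}\mu_{a,b}\mu_{c,d}\|Z_{a,b}-Z_{c,d}\|^2 .
\]

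The second step is to bound $\|Z_{a,b}-Z_{c,d}\|^2$ from above. Writing $\|Z_{a,b}-Z_{c,d}\|^2=\|Z_{a,b}\|^2+\|Z_{c,d}\|^2-2\langle Z_{a,b},Z_{c,d}\rangle$ and substituting $\|Z_{a,b}\|^2+\|Z_{c,d}\|^2=r_{a,b}^2+r_{c,d}^2-\Phi((a,b),(c,d))$ (the definition of $\Phi$), Lemma~\ref{lemmacrucialinequality} yields
\[
\|Z_{a,b}-Z_{c,d}\|^2\le r_{a,b}^2+r_{c,d}^2+\langle\gamma_1(a),\gamma_2(d)\rangle+\langle\gamma_1(c),\gamma_2(b)\rangle-2\langle Z_{a,b},Z_{c,d}\rangle .
\]
Now I would expand $\langle Z_{a,b},Z_{c,d}\rangle$ using $2Z_{a,b}=\gamma_1(a)+\gamma_2(b)$ and $2Z_{c,d}=\gamma_1(c)+\gamma_2(d)$ from~\eqref{sumgammasvab}; a short computation shows that $\langle\gamma_1(a),\gamma_2(d)\rangle+\langle\gamma_1(c),\gamma_2(b)\rangle-2\langle Z_{a,b},Z_{c,d}\rangle=-\tfrac12\langle\gamma_1(a)-\gamma_2(b),\gamma_1(c)-\gamma_2(d)\rangle$. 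Hence, setting $w_{a,b}:=\tfrac12(\gamma_1(a)-\gamma_2(b))$ — which by the definition of $\beta_{a,b}$ satisfies $\|w_{a,b}\|^2=\beta_{a,b}$ — we obtain the Kirszbraun-type inequality
\[
\|Z_{a,b}-Z_{c,d}\|^2\le r_{a,b}^2+r_{c,d}^2-2\langle w_{a,b},w_{c,d}\rangle .
\]

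Substituting this into the identity from the first step and summing out indices with $\sum\mu_{a,b}=1$, the double sums telescope and leave
\[
0\ge 2(\lambda_0^2-1)\sum_{(a,b)\in S}\mu_{a,b}r_{a,b}^2+2\Bigl\|\sum_{(a,b)\in S}\mu_{a,b}w_{a,b}\Bigr\|^2 .
\]
Since $r_{a,b}>0$ for all $(a,b)$ and the $\mu_{a,b}\ge 0$ sum to $1$, we have $\sum_{(a,b)\in S}\mu_{a,b}r_{a,b}^2>0$; so if $\lambda_0>1$ the right-hand side is strictly positive, a contradiction. Therefore $\lambda_0\le 1$, whence $\B_{a,b}(\lambda_0)\subset\B_{a,b}(1)=\B_{a,b}$ for every $(a,b)$, and Lemma~\ref{lemakirszbraun} gives $Z_0\in\bigcap_{(a,b)\in E^2}\B_{a,b}\neq\emptyset$.

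The whole argument is elementary once Lemma~\ref{lemmacrucialinequality} is in hand; I expect the only delicate point to be the algebra in the second step — checking that, after invoking that lemma together with~\eqref{sumgammasvab}, the residual inner-product terms assemble \emph{exactly} into $-2\langle w_{a,b},w_{c,d}\rangle$ with $\|w_{a,b}\|^2=\beta_{a,b}$. It is precisely this cancellation that makes the final double sum telescope with the correct signs and yields the clean dichotomy ``$\lambda_0^2-1>0$ against a sum of squares''. Keeping that quadratic bookkeeping straight, and remembering that the earlier reduction to the case $r_{a,b}>0$ is exactly what forces $\sum_{(a,b)\in S}\mu_{a,b}r_{a,b}^2>0$, is the crux.
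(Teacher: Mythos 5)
Your argument is correct and is essentially the paper's proof: the barycentric identity from Lemma \ref{lemakirszbraun}, the definition of $\Phi$, and Lemma \ref{lemmacrucialinequality} are combined in exactly the same way, with your vector $\sum_{(a,b)}\mu_{a,b}w_{a,b}$ being precisely the paper's $\tfrac12(\Gamma_1-\Gamma_2)$ and your final inequality being the paper's statement $\Delta\geq 2\|\tfrac12(\Gamma_1-\Gamma_2)\|^2\geq 0$. The only differences are cosmetic (contradiction instead of a direct proof, and applying the crucial inequality pairwise before summing rather than after), so the proposal stands as written.
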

\begin{proof}
If we define $\E= \lbrace (a,b) \in E^2 \: : \: \|Z_0-Z_{a,b}\| = \lambda_0 r_{a,b} \rbrace,$ from Lemma \ref{lemakirszbraun} we learn that 
\begin{equation}\label{exressionvm}
Z_0= \sum_{(a,b) \in \E} \xi_{a,b} Z_{a,b} \quad \text{with} \quad \sum_{(a,b)\in \E} \xi_{a,b}=1, \quad \xi_{a,b} \geq 0 \quad \text{for all} \quad (a,b)\in \E.
\end{equation}

By these properties we have that
$$
\sum_{(a,b) \in \E} \xi_{a,b}(Z_0- Z_{a,b})=\sum_{(c,d) \in \E} \xi_{c,d}(Z_0- Z_{c,d})=0,
$$
and therefore
\begin{equation}\label{equationproductzero}
\sum_{(a,b),\: (c,d) \in \E} \xi_{a,b}\xi_{c,d} \langle Z_0- Z_{a,b}, Z_0-Z_{c,d} \rangle=0.
\end{equation}

For any $(a,b), (c,d) \in \E$ we have that $\|Z_{a,b}-Z_0\|^2= \lambda_0^2 r^2_{a,b}$ and $\|Z_{c,d}-Z_0\|^2= \lambda_0^2 r^2_{c,d}$, and it is also clear that
$$
\|Z_{a,b}-Z_{c,d}\|^2 = \|Z_{a,b}-Z_0 \|^2+\|Z_{c,d}-Z_0 \|^2 -2 \langle Z_0- Z_{a,b}, Z_0-Z_{c,d} \rangle.
$$
Hence, multiplying by $\xi_{a,b} \xi_{c,d},$ taking sums over $(a,b), (c,d) \in \E$ and using \eqref{equationproductzero} we obtain
$$
\sum_{(a,b),\: (c,d) \in \E} \xi_{a,b}\xi_{c,d} \|Z_{a,b}-Z_{c,d}\|^2 = \lambda_0^2 \!\! \sum_{(a,b),\: (c,d) \in \E} \xi_{a,b}\xi_{c,d} (r_{a,b}^2+r_{c,d}^2),
$$
Now we set
\begin{align*}
\Delta : & = \sum_{(a,b),\: (c,d) \in \E} \xi_{a,b}\xi_{c,d} \left( -\|Z_{a,b}-Z_{c,d}\|^2 + r_{a,b}^2+r_{c,d}^2 \right) \\
& = (1-\lambda_0^2) \sum_{(a,b),\: (c,d) \in \E} \xi_{a,b}\xi_{c,d} (r_{a,b}^2+r_{c,d}^2).
\end{align*}
Since all the radii $r_{a,b}$ are positive, it is clear that showing $\lambda_0 \leq 1$ is equivalent to $\Delta \geq 0$ 

\begin{claim} \label{claimdeltanonnegative}
$\Delta \geq 0.$ 
\end{claim}
We inmediately see that
$$
\Delta= \sum_{(a,b),\: (c,d) \in \E} \!\!\!\! \xi_{a,b}\xi_{c,d} \left( -\|Z_{a,b}\|^2-\|Z_{c,d}\|^2 + r_{a,b}^2+r_{c,d}^2 \right) + 2 \!\!\!\! \sum_{(a,b),\: (c,d) \in \E} \!\!\!\!\xi_{a,b}\xi_{c,d} \: \langle Z_{a,b}, Z_{c,d} \rangle.
$$
On the other hand, by \eqref{exressionvm} we obtain
$$
\|Z_0\|^2 = \sum_{(a,b),\: (c,d) \in \E} \xi_{a,b}\xi_{c,d} \langle Z_{a,b}, Z_{c,d} \rangle.
$$
This implies that
\begin{equation}\label{newexpressiondelta}
\Delta = 2 \|Z_0\|^2 + \!\!\! \sum_{(a,b),\: (c,d) \in \E} \!\!\! \xi_{a,b}\xi_{c,d}  \: \Phi( (a,b), (c,d)).
\end{equation}
We define now
$$
\Gamma_1:= \sum_{(a,b)\in \E} \xi_{a,b} \gamma_1(a), \quad \Gamma_2:= \sum_{(a,b)\in \E} \xi_{a,b} \gamma_2(b)
$$
and we easily deduce from equation \eqref{sumgammasvab} that
\begin{equation} \label{expressionsumxy}
\Gamma_1+\Gamma_2 = \sum_{(a,b) \in \E} \xi_{a,b} ( \gamma_1(a)+\gamma_2(b))= 2\sum_{(a,b) \in \E} \xi_{a,b}  Z_{a,b} = 2 Z_0.
\end{equation}
Applying Lemma \ref{lemmacrucialinequality} we obtain
\begin{align*}
& \sum_{(a,b),\: (c,d) \in \E} \!\!\! \xi_{a,b}\xi_{c,d} \: \Phi( (a,b), (c,d)) \geq - \!\!\! \!\!\! \sum_{(a,b),\: (c,d) \in \E} \!\!\! \xi_{a,b}\xi_{c,d} \Big ( \langle  \gamma_1(a), \gamma_2(d) \rangle+\langle \gamma_1(c), \gamma_2(b) \rangle \Big ) \\
& \quad =  - \: \Big \langle \sum_{(a,b)\in \E} \xi_{a,b} \gamma_1(a) , \sum_{(c,d)\in \E} \xi_{c,d} \gamma_2(d) \Big \rangle - \: \Big \langle \sum_{(c,d)\in \E} \xi_{c,d} \gamma_1(c) , \sum_{(a,b)\in \E} \xi_{a,b} \gamma_2(b) \Big \rangle \\
& \quad = -\langle \Gamma_1,\Gamma_2 \rangle  -\langle \Gamma_1,\Gamma_2 \rangle =  -2\langle \Gamma_1,\Gamma_2 \rangle.
\end{align*}
Combining this inequality with equations \eqref{newexpressiondelta} and \eqref{expressionsumxy} we have
$$
\Delta \geq 2 \: \bigg \| \frac{ \Gamma_1+\Gamma_2}{2} \bigg \|^2 - 2 \langle \Gamma_1,\Gamma_2 \rangle = 2 \: \bigg  \| \frac{ \Gamma_1-\Gamma_2}{2} \bigg \|^2,
$$
which implies $\Delta \geq 0.$ This finishes the proof of Claim \ref{claimdeltanonnegative}, and therefore that of Lemma \ref{lemmaconclusion} too.
\end{proof}

The proofs of Theorems \ref{mainextensiontheorem} and \ref{lipschitzversionmainextensiontheorem} are now complete.

\medskip

Let us finish this paper by showing that there exist bounded, smooth convex functions defined on an open neighborhood of a closed ball in $X:=\ell_2(\R)$  which have no continuous convex extensions to all of $X$.
Denote by $C$ the closed unit ball of $X.$ The natural complexification of the space is 
$
X_\C = \ell_2(\C).
$
Also let $U=\{x\in X : \|x\|<2\}$, $U_\C=\{ x\in X_\C : \|x\|<2\}$, and $S_X=\{x\in X : \|x\|=1\}.$ 
\begin{example}\label{counterexample in Hilbert}
There exists a function $F: U \to \R$ such that
\begin{itemize}
\item[(i)] $F$ is analytic on $U$;
\item[(ii)] $F$ is convex on $U$ with $D^2F(x)(v^2) \geq 1$ for every $x\in U$, $v\in S_X$;
\item[(iii)] $F$ is bounded on $C$, and
\item[(iv)] $F_{|_C}$ has no continuous convex extension to the whole space $X.$
\end{itemize}
\end{example}
\begin{proof}
Let $\{e_n\}_{n\in\N}$ be the canonical basis of $X$, and consider the sequence of vectors $ \lbrace \widetilde{e_n} \rbrace_n \subset C$ defined as follows:
$$
\widetilde{e_n}=\frac{1}{2}e_1+ \frac{\sqrt{3}}{2}e_n, \quad n\geq 2.
$$
For every $n\geq 2,$ we define the linear functional $h_n \in X^*$ by $h_n(x)= \langle x, \widetilde{e_n} \rangle$ for all $x\in X.$ Equivalently, for every $x=(x_n)_{n\geq 1} \in X,$ we have $h_n(x)= \frac{1}{2}x_1+ \frac{\sqrt{3}}{2}x_n$ for every $n\geq 2.$ Now let us define
$$
\begin{array}{rccl}
f : &U & \longrightarrow &  \R  \\
   &x & \longmapsto  & \sum_{n=2}^\infty (h_n(x))^{2n},
\end{array}
$$
or equivalently $f(x) = \sum_{n \geq 2} \left( \frac{1}{2}x_1+ \frac{\sqrt{3}}{2}x_n \right)^{2n}$ for all $x=(x_n)_n\in U.$ Let us first check that $f$ is well defined. Given $x\in U,$ take $r=2-|x_1|>0.$ Because $x\in \ell_2,$ there is some $n_0\in \N$ such that $|x_n| \leq \frac{r}{2 \sqrt{3}}$ whenever $n \geq n_0.$ Therefore, if $n \geq n_0,$ we have
\begin{align*}
\bigg| \frac{1}{2}x_1+ \frac{\sqrt{3}}{2}x_n \bigg| & \leq \frac{1}{2}|x_1|+ \frac{\sqrt{3}}{2} |x_n| = \frac{1}{2}(2-r)+ \frac{\sqrt{3}}{2}|x_n| \\
& \leq  \frac{1}{2}(2-r)+ \frac{r}{4}= 1- \frac{r}{4}=: \lambda. 
\end{align*}
Since $\lambda <1,$  
$$
\sum_{n\geq n_0} \bigg| \frac{1}{2}x_1+ \frac{\sqrt{3}}{2}x_n \bigg| ^{2n} \leq \sum_{n\geq n_0} \lambda^{2n}
$$
converges and this shows that $f(x)$ is finite.
\begin{claim}
$f$ is bounded by $M:=\frac{49}{24}$ on $C$.
\end{claim}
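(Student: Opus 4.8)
The plan is to exploit that the vectors $\widetilde{e_n}$ ($n\geq 2$) are ``almost orthogonal'': each has $\|\widetilde{e_n}\|=1$, while $\langle\widetilde{e_n},\widetilde{e_m}\rangle=\tfrac14$ for $n\neq m$ (a routine inner-product computation). Consequently a point $x$ of the unit ball $C$ cannot have $|h_n(x)|=|\langle x,\widetilde{e_n}\rangle|$ close to $1$ for many $n$ at once, and since $|h_n(x)|^{2n}$ decays geometrically once $|h_n(x)|$ is bounded away from $1$, only a controlled number of ``large'' terms can occur. Quantitatively, the first step is to prove the counting fact: \emph{for every $x\in C$ there is at most one index $n\geq 2$ with $h_n(x)^2>\tfrac58$.} To prove it, suppose $n_1,\dots,n_k$ are distinct with $h_{n_i}(x)^2>\tfrac58$, set $\varepsilon_i=\sign h_{n_i}(x)$ and $y=\sum_{i=1}^k\varepsilon_i\,\widetilde{e_{n_i}}$. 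On one hand $\langle x,y\rangle=\sum_{i=1}^k|h_{n_i}(x)|>k\sqrt{5/8}$ and $\langle x,y\rangle\leq\|x\|\,\|y\|\leq\|y\|$; on the other hand, expanding and using $\|\widetilde{e_{n_i}}\|^2=1$ and $\langle\widetilde{e_{n_i}},\widetilde{e_{n_j}}\rangle=\tfrac14$ for $i\neq j$, one gets $\|y\|^2=\tfrac14\big(\sum_{i=1}^k\varepsilon_i\big)^2+\tfrac34 k\leq\tfrac14 k^2+\tfrac34 k$. Combining the two estimates gives $\tfrac58 k^2<\tfrac14 k^2+\tfrac34 k$, i.e. $\tfrac38 k<\tfrac34$, so $k<2$ and hence $k\leq 1$.

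With this in hand the bound on $f$ follows by splitting the defining series at the level $\tfrac58$. For $x\in C$ we have $|h_n(x)|=|\langle x,\widetilde{e_n}\rangle|\leq\|x\|\,\|\widetilde{e_n}\|\leq 1$, so every term satisfies $h_n(x)^{2n}=(h_n(x)^2)^n\leq 1$; thus the unique index (if any) with $h_n(x)^2>\tfrac58$ contributes at most $1$ to $f(x)$. All the remaining indices have $h_n(x)^2\leq\tfrac58$, hence $h_n(x)^{2n}\leq(5/8)^n$, and summing these over all $n\geq 2$ bounds their total contribution by $\sum_{n\geq 2}(5/8)^n=\frac{(5/8)^2}{1-5/8}=\frac{25}{24}$. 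Adding the two contributions yields $f(x)\leq 1+\frac{25}{24}=\frac{49}{24}=M$ for every $x\in C$, which is the claim.

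The delicate point, and the one that pins down the constant, is the choice of the threshold. For the counting step to force at most one ``large'' index I need the threshold $\tau$ to satisfy $\tau-\tfrac14\geq\tfrac38$, i.e. $\tau\geq\tfrac58$; and the resulting upper bound $1+\frac{\tau^2}{1-\tau}$ is increasing in $\tau$ on $[\tfrac58,1)$, hence smallest exactly at $\tau=\tfrac58$, where it equals $\frac{49}{24}$. Everything else — the value of $\langle\widetilde{e_n},\widetilde{e_m}\rangle$, the identity for $\|y\|^2$, and the geometric summation — is entirely routine, so the whole argument rests on the single inner-product inequality in the counting step.
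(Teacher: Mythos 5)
Your proof is correct, but it takes a genuinely different route from the paper's. The paper argues in coordinates: for $x\in C$ one has $\sum_{n\geq 2}x_n^2\leq 1-x_1^2$, so at most one coordinate $N\geq 2$ satisfies $x_N^2>\frac{1-x_1^2}{2}$, and then the one-variable functions $g(t)=\frac{t}{2}+\frac{\sqrt{3}}{2}\sqrt{1-t^2}$ and $h(t)=\frac{t}{2}+\frac{\sqrt{3}}{2}\sqrt{\frac{1-t^2}{2}}$ are maximized explicitly (at $t=\frac12$ and $t=\sqrt{2/5}$) to bound the exceptional term by $1$ and each remaining term by $(5/8)^n$, giving the same split $1+\sum_{n\geq 2}(5/8)^n=\frac{49}{24}$. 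You instead discard the coordinate structure and use only that the $\widetilde{e_n}$ are unit vectors with pairwise inner products $\frac14$, running a Bessel-type counting argument: testing $x$ against the signed sum $y=\sum_i\varepsilon_i\widetilde{e_{n_i}}$ and comparing $\langle x,y\rangle>k\sqrt{5/8}$ with $\|y\|^2=\frac14\big(\sum_i\varepsilon_i\big)^2+\frac34 k\leq\frac14k^2+\frac34k$ forces $k\leq 1$, i.e.\ at most one index with $h_n(x)^2>\frac58$; your estimates here and the final geometric summation are all correct (the large term is $\leq 1$ by Cauchy--Schwarz since $\|\widetilde{e_n}\|=1$). What the paper's method buys is a completely elementary, purely computational verification tied to the explicit form $h_n(x)=\frac12 x_1+\frac{\sqrt3}{2}x_n$; what yours buys is robustness and some insight: it applies verbatim to any system of unit vectors whose pairwise inner products are at most $\frac14$ in absolute value, and it explains where the threshold comes from ($\tau\geq\frac14+\frac38=\frac58$ from the counting inequality, with $\tau=\frac58$ minimizing the resulting bound $1+\frac{\tau^2}{1-\tau}$), whereas in the paper the value $\frac58$ emerges as the maximum of $h$.
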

\begin{proof} Given $x\in C,$ and $x=(x_n)_{n \geq 1},$ since $\sum_{n \geq 1} x_n^2 \leq 1,$ we have that $\sum_{n \geq 2} x_n^2 \leq 1- x_1^2;$ and this implies that there is at most one coordinate $N \geq 2 $ such that $x_N^2 > \frac{1-x_1^2}{2}.$ Hence, the rest of the coordinates satisfy
$$
 |x_n| \leq \sqrt{\frac{1-x_1^2}{2}} \quad \text{for every} \quad n \geq 2 \quad \text{with} \quad n \neq N.
 $$ 
And, of course, $|x_N| \leq \sqrt{1-x_1^2}.$ 
We easily have
\begin{align*} 
f(x) & \leq \sum_{n\geq 2} \left( \frac{1}{2}|x_1|+ \frac{\sqrt{3}}{2}|x_n| \right)^{2n} = \left( \frac{1}{2}|x_1|+ \frac{\sqrt{3}}{2}|x_N| \right)^{2N}
\!\!\! + \!\!\! \sum_{n\geq 2, \: n \neq N} \left( \frac{1}{2}|x_1|+ \frac{\sqrt{3}}{2}|x_n| \right)^{2n} \\
& \leq \left( \frac{1}{2}|x_1|+ \frac{\sqrt{3}}{2}\sqrt{1-x_1^2} \right)^{2N} + \sum_{n\geq 2, \: n \neq N} \left( \frac{1}{2}|x_1|+ \frac{\sqrt{3}}{2}\sqrt{\frac{1-x_1^2}{2}} \right)^{2n}.
\end{align*}
In order to get a bound for the first sum in the last term, we consider the function $g(t) = \frac{t}{2}+\frac{\sqrt{3}}{2} \sqrt{1-t^2}, \: t\in [0,1].$ A simple calculation shows that $g$ has a maximum at $t=\frac{1}{2}$ and then $g(t) \leq g(1/2)=1$ for all $t\in [0,1].$ Therefore
$$
\left( \frac{1}{2}|x_1|+ \frac{\sqrt{3}}{2}\sqrt{1-x_1^2} \right)^{2N} \leq 1.
$$
The second sum can be bounded as follows. Take $h(t)= \frac{t}{2}+ \frac{\sqrt{3}}{2}\frac{\sqrt{1-t^2}}{\sqrt{2}}, \: t\in [0,1].$ We easily deduce that $h$ attains a maximum at $t= \sqrt{\frac{2}{5}}.$ Hence 
$
h(t) \leq h\left( \sqrt{\frac{2}{5}} \right)= \sqrt{\frac{5}{8}},
$
for every $t\in [0,1].$ This implies
$$
\left( \frac{1}{2}|x_1|+ \frac{\sqrt{3}}{2}\sqrt{\frac{1-x_1^2}{2}} \right)^{2n} \leq \left( \sqrt{\frac{5}{8}} \right)^{2n} = \left( \frac{5}{8} \right)^n \quad \text{for all} \quad n\geq 2, \quad n \neq N.
$$
Therefore, 
$
f(x) \leq 1 + \sum_{n\geq 2, \: n \neq N} \left( \frac{5}{8} \right)^n  \leq 1 + \sum_{n\geq 2} \left( \frac{5}{8} \right)^n  = \frac{49}{24}.
$
\end{proof}
\begin{claim}
$f$ is real analytic on $U.$
\end{claim}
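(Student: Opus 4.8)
The plan is to prove the stronger statement that $f$ extends to a holomorphic function on $U_\C\subset X_\C=\ell_2(\C)$; since a function on an open subset of a real Banach space is real analytic precisely when it extends holomorphically to a neighbourhood of that set in the complexification, this yields the Claim. First I would fix the candidate extension: for $z=(z_n)_n\in X_\C$ write $h_n(z)=\tfrac12 z_1+\tfrac{\sqrt3}{2}z_n$ for the ($\C$-linear) extension of the functional $h_n$, and set $\widetilde f(z)=\sum_{n\ge 2}(h_n(z))^{2n}$. The estimate used above to check that $f$ is well defined applies verbatim with $x\in U$ replaced by $z\in U_\C$: putting $r=2-|z_1|>0$ and choosing $n_0$ with $|z_n|\le r/(2\sqrt3)$ for $n\ge n_0$ (possible since $z\in\ell_2(\C)$), one gets $|h_n(z)|\le 1-r/4<1$ for all $n\ge n_0$, so $\widetilde f(z)$ is given by an absolutely convergent series at every $z\in U_\C$, and $\widetilde f|_U=f$.

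Next I would promote this to local uniform convergence. Fix $z_0\in U_\C$ and note that $c:=\tfrac12|(z_0)_1|\le\tfrac12\|z_0\|<1$. Choose $\rho>0$ small enough that $\|z_0\|+\rho<2$ and $c+3\rho<1$, and put $\lambda:=c+3\rho<1$; since $(z_0)_n\to 0$ there is $n_0\ge 2$ with $\tfrac{\sqrt3}{2}|(z_0)_n|\le\rho$ for all $n\ge n_0$. Then for every $z\in X_\C$ with $\|z-z_0\|\le\rho$ and every $n\ge n_0$ one has $|z_1|\le|(z_0)_1|+\rho$ and $\tfrac{\sqrt3}{2}|z_n|\le\tfrac{\sqrt3}{2}|(z_0)_n|+\tfrac{\sqrt3}{2}\rho<2\rho$, whence
\[
|h_n(z)|\le\tfrac12|z_1|+\tfrac{\sqrt3}{2}|z_n|<c+3\rho=\lambda<1 .
\]
Hence $|(h_n(z))^{2n}|\le\lambda^{2n}$ on the ball $B(z_0,\rho)\subset X_\C$, and since $\sum_{n\ge n_0}\lambda^{2n}<\infty$ the Weierstrass $M$-test gives uniform convergence of $\sum_{n\ge n_0}(h_n(z))^{2n}$ on $B(z_0,\rho)$.

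Finally, each partial sum $z\mapsto\sum_{n=2}^{N}(h_n(z))^{2n}$ is a continuous polynomial on $X_\C$, hence holomorphic, and a locally uniform limit of holomorphic maps between complex Banach spaces is holomorphic (a standard fact, reducible to the one-variable case by restricting to each complex line, together with continuity of the limit). Therefore $\widetilde f$ is holomorphic on each ball $B(z_0,\rho)$, hence on all of $U_\C$, and restricting to $U$ shows that $f$ is real analytic on $U$. I do not expect a genuine obstacle here; the only points requiring a little care are fixing the constants in the right order (first $z_0$, then $\rho$, then $n_0$) and quoting the infinite-dimensional version of the statement that uniform limits of holomorphic functions are holomorphic.
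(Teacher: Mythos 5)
Your proposal is correct and follows essentially the same route as the paper: both complexify $f$ to $\widetilde f(z)=\sum_{n\ge2}(h_n(z))^{2n}$ on $U_\C$, establish a local Weierstrass-type majorization $|h_n(z)|^{2n}\le \lambda^{2n}$ (with $\lambda<1$) on a small ball around each point by splitting off finitely many indices, and conclude holomorphy of the locally uniform limit, hence real analyticity of $f$ on $U$. The only differences are the particular constants chosen for the radius and the majorants, which is immaterial.
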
 
\begin{proof} Consider the complex function
$$
\begin{array}{rccl}
\widetilde{f} : &U_\C & \longrightarrow &  \C  \\
   &z & \longmapsto  & \sum_{n=2}^\infty \left( \frac{1}{2}z_1+ \frac{\sqrt{3}}{2}z_n \right)^{2n}
\end{array}
$$
Obviously the restriction of $\widetilde{f}$ to $U$ is the function $f,$ and we can see that $\tilde{f}$ is well defined with the same calculations as we made above for $f.$ Of course it is enough to prove that $\widetilde{f}$ is holomorphic on $U_\C$, for which in turn it is enough to check that, given $z\in U_\C$ there are $r>0$ and a sequence $\lbrace M_n \rbrace_{n\geq 2} $ of positive numbers such that
$$
\sum_{n \geq 2} M_n < + \infty \quad \text{and} \quad \bigg|  \frac{1}{2}y_1+ \frac{\sqrt{3}}{2}y_n \bigg| ^{2n} \leq M_n \quad \text{for all} \quad y\in \overline{B}_\C(z,r) \subseteq U_\C, \quad n \geq 2,
$$
where $\overline{B}_\C(z,r) = \lbrace x\in X_\C \: : \: \|z-y\| \leq r \rbrace.$
Indeed, fix $z\in U_\C.$ We take $r>0$ such that $\overline{B}_\C(z,r) \subset U_\C $ with $\| z\| + r <2$ and $r\leq \frac{2-|z_1|}{4(1+\sqrt{3})}.$
 Find $n_0\in \N$ such that $|z_n| \leq \frac{2-|z_1|}{2 \sqrt{3}}$ whenever $n \geq n_0.$ Of course these $r>0$ and $n_0\in \N$ only depend on $z.$ Define the numbers
$$
 \lambda_n   =  \left\lbrace 
	\begin{array}{ccl}
	1+\sqrt{3}  & \mbox{if } &  2\leq n \leq n_0-1 \\
	\dfrac{6+|z_1|}{8}  & \mbox{if }&  n \geq n_0,
	\end{array}
	\right.
$$
and $M_n= \lambda_n^{2n}$ for all $n\geq 2.$ Since $|z_1| <2,$ the sum $\sum_{n\geq 2} M_n $ converges. If $y\in \overline{B}_\C(z,r) ,$ with $y=(y_n)_{n\geq 1},$ then
$
|y_n|\leq r + |z_n| 
$
for every $n\geq 1.$ Therefore, if $n \geq n_0,$ because $|z_n| \leq \frac{2-|z_1|}{2 \sqrt{3}}$ and $r\leq \frac{2-|z_1|}{4(1+\sqrt{3})}$ we have
\begin{align*}
\bigg|  \frac{1}{2}y_1 + \frac{\sqrt{3}}{2}y_n \bigg|  &\leq \frac{1}{2}|y_1|+ \frac{\sqrt{3}}{2}|y_n| \leq \frac{1}{2}(|z_1|+r)+ \frac{\sqrt{3}}{2}(|z_n|+r) \\
& \leq \frac{1+\sqrt{3}}{2} \frac{2-|z_1|}{4(1+\sqrt{3})} + \frac{|z_1|+\frac{1}{2}(2-|z_1|)}{2}= \lambda_n.
\end{align*}
And for integers $2\leq n \leq n_0-1,$ we have the obvious inequality $
 \big |  \frac{1}{2}y_1 + \frac{\sqrt{3}}{2}y_n \big | \leq 1+ \sqrt{3} = \lambda_n.
$
Hence
$$
\bigg|  \frac{1}{2}y_1 + \frac{\sqrt{3}}{2}y_n \bigg|^{2n} \leq M_n \quad \text{for every}  \quad n \geq 2 
$$
and this proves our statement. 
\end{proof} 

Now, the convexity of $f$ can be easily checked: The function $f_n=g_n \circ h_n,$ being $h_n$ a linear functional and $\R \ni t \to g_n(t)=t^{2n}$ a convex function for all $n \geq 2,$ is convex on $U,$ and $f,$ being the sum of convex functions, is convex on $U$ as well. Now define $F:=f+ N,$ where $N: X \to \R$ is the function defined by $N(x)= \frac{\|x \|^2}{2}$ for all $x\in X.$ Since $X$ is a Hilbert space, the function $N$ is analytic on $X.$ Of course $N$ is bounded on $C$ and $D^2 N (x)(v)^2 =   \|v\|^2 = 1$ for all $v\in S_X$ and all $x\in X.$ Hence $F$ is real analytic, is bounded on $C$ and, since $f$ is convex and differentiable, $D^2F(x)(v^2)= D^2f(x)(v^2)+ D^2N(x)(v^2) \geq 1$ for all $x\in U$ and all $v\in S_X.$ We then have proved (i), (ii) and (iii) of our Theorem. 

In order to prove (iv), consider the minimal convex extension of $F$,
$$
m_C (F) ( x)= \sup_{y\in C} \lbrace F(y)+ \langle \nabla F (y) , x-y \rangle \rbrace, \quad x\in X.
$$ Observe that (iv) will be proved as soon as we find points $x\in X$ with $m_C(F)(x)= +\infty. $ We next prove that in fact $m_C(F)=+\infty$ for all $x$ of the form $x=r e_1,$ with $r>2.$ So fix $r>2$ and $x=re_1.$ For any $k \geq 2$ and $n \geq 2$ we inmediately see that $\langle \widetilde{e_n} , \widetilde{e_k} \rangle = 1/4 $ for $n\neq k$ and $\langle \widetilde{e_k} , \widetilde{e_k} \rangle = 1. $ Then
$$
f(\widetilde{e_k})= 1+\sum_{n\geq 2, \: n \neq k} \left( \frac{1}{4} \right)^{2n} \quad \text{and} \quad N(\widetilde{e_k}) = \frac{1}{2}, \quad k \geq 2.
$$
Since $f$ is analytic, we can calculate its derivatives by differentiating the series term by term, and then
$$
\langle \nabla f (\widetilde{e_k}) , v  \rangle= \sum_{ n \geq 2 } 2n  \langle \widetilde{e_k} , \widetilde{e_n} \rangle ^{2n-1}\langle v , \widetilde{e_n} \rangle = \sum_{ n \geq 2, \: n \neq k } 2n  \left( \frac{1}{4} \right) ^{2n-1} \! \! \! \langle v , \widetilde{e_n} \rangle + 2k \: \langle v, \widetilde{e_k} \rangle
$$
for every $v\in X$ and $k \geq 2.$ On the other hand,  $\langle \nabla N (\widetilde{e_k}) , v  \rangle = \langle \widetilde{e_k}, v \rangle$  for all $v\in X.$ For $v=x-\widetilde{e_k},$ we have
$$
 \langle v, \widetilde{e_n} \rangle = \bigg \langle r e_1, \frac{1}{2}e_1+\frac{\sqrt{3}}{2} e_n \bigg \rangle - \langle \widetilde{e_k} , \widetilde{e_n} \rangle = \frac{r}{2}-\left\lbrace 
	\begin{array}{ccl}
	1  & \mbox{if } &  n=k \\
	\frac{1}{4}  & \mbox{if }& n \neq k,
	\end{array}
	\right.
$$
Gathering the above inequalities we obtain, for $k \geq 2,$
\begin{align*}
& F(\widetilde{e_k})   +  \langle \nabla F (\widetilde{e_k} ) , x-\widetilde{e_k}  \rangle =  f(\widetilde{e_k})+N(\widetilde{e_k}) +  \langle \nabla f (\widetilde{e_k} ) , x-\widetilde{e_k}  \rangle +   \langle \nabla N (\widetilde{e_k} ) , x-\widetilde{e_k}  \rangle\\
& = 1+\sum_{n\geq 2, \: n \neq k} \! \! \! \left( \frac{1}{4} \right)^{2n} + \frac{1}{2}+\sum_{ n \geq 2, \: n \neq k } \! \! \! 2n  \left( \frac{1}{4} \right) ^{2n-1} \! \! \! \left( \frac{r}{2}-\frac{1}{4} \right) + 2k \left( \frac{r}{2}-1 \right)+ \left( \frac{r}{2}- 1 \right) \\
& \geq k (r-2);
\end{align*}
and the last term tends to $+\infty$ as $ k $ goes to $+\infty.$ We thus have proved that $m_C(F)(x)=+\infty$ for those points $x\in X$ of the form $x=r e_1, \: r>2.$
\end{proof}



\begin{thebibliography}{}

\bibitem{AMSmooth}
D. Azagra and C. Mudarra, {\em Smooth convex extensions of convex functions}, preprint, 2015, arXiv:1501.05226v4 [math.CA]

\bibitem{AM}
D. Azagra and C. Mudarra, {\em Whitney Extension Theorems for convex functions of the classes $C^1$ and $C^{1,\omega}$}, preprint, 2015,  	arXiv:1507.03931 [math.CA]

\bibitem{BrudnyiBrudnyi}
A. Brudnyi, Y. Brudnyi, {\em Methods of geometric analysis in extension and trace problems. Volumes 1 and 2.}
Monographs in Mathematics, 102 and 103. Birkh\"auser/Springer Basel AG, Basel, 2012.

\bibitem{Federer}
H. Federer, {\em Geometric Measure Theory}, Springer-Verlag, Berlin and New York, 1969.

\bibitem{FeffermanSurvey}
C. Fefferman, {\em Whitney's extension problems and interpolation of data}. Bull. Amer. Math. Soc. (N.S.) 46 (2009), no. 2, 207--220.

\bibitem{GhomiJDG2001}
M. Ghomi, {\em Strictly convex submanifolds and hypersurfaces of positive curvature}. J. Differential Geom. 57 (2001), 239--271.

\bibitem{GhomiPAMS2002}
M. Ghomi, {\em The problem of optimal smoothing for convex functions}. Proc. Amer. Math. Soc. 130 (2002) no. 8, 2255--2259.

\bibitem{Glaeser}
G. Glaeser, {\em Etudes de quelques alg\`ebres tayloriennes}, J. d'Analyse 6 (1958), 1-124.

\bibitem{LeGruyer}
E.L Gruyer, {\em Minimal Lipschitz extensions to differentiable functions defined on a Hilbert space}. Geom. Funct. Anal 19(4) (2009), 1101-1118.

\bibitem{JSSG}
M. Jim\'enez-Sevilla, L. S\'anchez--Gonz\'alez, {\em On smooth extensions of vector-valued functions defined on closed subsets of Banach spaces}. Math. Ann. 355 (2013), no. 4, 1201--1219.

\bibitem{VeselyZajicek}
L. Vesel\'y, L. Zaj\'icek, {\em On extensions of d.c. functions and convex 
functions}. J. Convex Anal. 17 (2010), no. 2, 427--440.

\bibitem{Wells}
J.C. Wells, {\em  Differentiable functions on Banach spaces with Lipschitz derivatives}. J. Differential Geometry 8 (1973), 135--152.

\bibitem{Whitney}
H. Whitney, {\em Analytic extensions of differentiable functions defined in closed sets}, Trans. Amer. Math. Soc. 36 (1934), 63--89.

\bibitem{MinYan}
M. Yan, {\em Extension of Convex Function}. J. Convex Anal. 21 (2014) no. 4, 965--987.


\end{thebibliography}
\end{document}